\numberwithin{equation}{section}
\def\Ext{\mbox{\rm Ext}} \def\Hom{\mbox{\rm Hom}} \def\dim{\mbox{\rm dim}\,} 
\def\lr#1{\langle #1\rangle}    \def\mod{\mbox{\rm \textbf{mod}}\,}
\def\End{\mbox{\rm End}\,}
\def\ind{\mbox{\rm ind}\,}\def\add{\mbox{\rm add}\,}
\def\A{\mathcal{A}\,} 
\def\Cone{\mbox{\rm Cone}\,} \def\Hom{\mbox{\rm Hom}\,}  \def\Im{\mbox{\rm Im}\,}
\theoremstyle{plain}
\newtheorem{theorem}{\bf Theorem}[section]
\newtheorem{lemma}[theorem]{\bf Lemma}
\newtheorem{corollary}[theorem]{\bf Corollary}
\newtheorem{proposition}[theorem]{\bf Proposition}
\theoremstyle{definition}
\newtheorem{definition}[theorem]{\bf Definition}
\newtheorem{remark}[theorem]{\bf Remark}
\newtheorem{example}[theorem]{\bf Example}
\newtheorem{condition}[theorem]{\bf Condition}
\newcommand{\bt}{\begin{theorem}}
\newcommand{\et}{\end{theorem}}
\newcommand{\bl}{\begin{lemma}}
\newcommand{\el}{\end{lemma}}
\newcommand{\bd}{\begin{definition}}
\newcommand{\ed}{\end{definition}}
\newcommand{\bc}{\begin{corollary}}
\newcommand{\ec}{\end{corollary}}
\newcommand{\bp}{\begin{proof}}
\newcommand{\ep}{\end{proof}}
\newcommand{\bx}{\begin{example}}
\newcommand{\ex}{\end{example}}
\newcommand{\br}{\begin{remark}}
\newcommand{\er}{\end{remark}}
\newcommand{\be}{\begin{equation}}
\newcommand{\ee}{\end{equation}}
\newcommand{\ba}{\begin{align}}
\newcommand{\ea}{\end{align}}
\newcommand{\bn}{\begin{enumerate}}
\newcommand{\en}{\end{enumerate}}
\newcommand{\bcs}{\begin{cases}}
\newcommand{\ecs}{\end{cases}}
\newcommand{\RNum}[1]{\uppercase\expandafter{\romannumeral #1\relax}}
\renewcommand{\section}{\@startsection{section}{1}{0mm}
  {-\baselineskip}{0.5\baselineskip}{\bf\leftline}}
\begin{document}
\title[Cluster characters for Frobenius extriangulated categories]{Cluster characters for 2-Calabi-Yau \\Frobenius extriangulated categories}
\author[L. Wang, J. Wei, H. Zhang]{Li Wang, Jiaqun Wei,  Haicheng Zhang}
\address{School of Mathematics-Physics and Finance, Anhui Polytechnic University, Wuhu 241000, P. R. China.\endgraf}
\email{wl04221995@163.com (Wang)}
\address{Institute of Mathematics, School of Mathematical Sciences, Nanjing Normal University,
 Nanjing 210023, P. R. China.\endgraf}
\email{weijiaqun@njnu.edu.cn (Wei)}
\address{Institute of Mathematics, School of Mathematical Sciences, Nanjing Normal University,
 Nanjing 210023, P. R. China.\endgraf}
\email{zhanghc@njnu.edu.cn (Zhang)}

\subjclass[2010]{16G20, 18E10, 18E30.}
\keywords{Extriangulated categories; Cluster algebras; Cluster characters.}

\begin{abstract}
We define the cluster characters for 2-Calabi-Yau Frobenius extriangulated categories with cluster tilting objects. This provides a unified framework of cluster characters in 2-Calabi-Yau triangulated categories and 2-Calabi-Yau Frobenius exact categories given by Palu and Fu--Keller, respectively.
\end{abstract}

\maketitle

\section{Introduction}
Cluster algebras were introduced by Fomin and Zelevinsky in \cite{Fo}, and  since then played an important role in Lie theory and  representation theory. They are commutative algebras generated by the cluster variables which are produced by mutating seeds. The  relationship between cluster algebra and tilting theory was disclosed in \cite{Ma}, which was also the main motivation to introduce the cluster category (cf. \cite{Bu2}). Cluster categories are a class of $2$-CY ($2$-Calabi-Yau) triangulated categories admitting cluster tilting objects and can be used to ``categorify'' cluster algebras. To this end, one needs to define a map from the considered category $\mathscr{C}$ to the corresponding Laurent polynomial algebra, called the cluster character on $\mathscr{C}$, such that it satisfies some properties and realizes the exchange relations in the cluster algebra.

Let $\mathscr{C}$ be a $2$-CY triangulated category with a cluster tilting object $T$. Palu \cite{Pa} defined the index with respect to $T$ and showed that the Caldero--Chapoton map introduced in [2, Section 6] is a cluster character on $\mathscr{C}$. Fu--Keller \cite{Chan} defined a cluster character on $2$-CY Frobenius exact categories and established the link between cluster algebras with coefficients and $2$-CY Frobenius exact categories via cluster characters.

Recently, Nakaoka--Palu \cite{Na} introduced a notion of extriangulated categories by extracting properties on triangulated categories and exact categories. The extriangulated categories have gotten significant applications in the categorifications of some related notions in cluster algebras in which the indices, Grothendieck groups and $n$-cluster tilting subcategories play a crucial role. For instances, Padrol--Palu--Pilaud--Plamondon \cite{PPPP} studied the relations between $g$-vectors of finite type cluster algebras via the indices, Grothendieck groups in extriangulated categories. Wang--Wei--Zhang \cite{Wl} developed a general framework for $c$-vectors in cluster algebras using the indices in 2-CY extriangulated categories. Recently, Wu \cite{Wu} generalized the construction of (generalized) cluster categories and introduced the relative cluster categories. Then he considered an extension closed subcategory of the relative cluster category, which was proved to be a 2-CY Frobenius extriangulated category with a cluster tilting object, called the Higgs category. Keller--Wu \cite{Kellerwu} used the Higgs category to provide the categorification of cluster algebras with coefficients. In particular, they defined a canonical cluster character on the Higgs category.

The main aim of this paper is to provide a unified framework of cluster characters in 2-CY triangulated categories and 2-CY Frobenius exact categories by defining the cluster characters for 2-CY Frobenius extriangulated categories.

The paper is organized as follows: we summarize some basic definitions and properties of extriangulated categories and indices in Section 2. In Section 3, we investigate the indices with respect to $n$-cluster tilting subcategories and related Grothendieck groups in Frobenius extriangulated categories. In particular, for an $n$-cluster tilting object $T$, we define a map $\Phi: K_{0}(\mod C)\rightarrow K_{0}^{\rm sp}(\add{T})$ (see Theorem \ref{main1}). Section 4 is devoted to defining the cluster characters of 2-CY Frobenius extriangulated categories $\mathscr{C}$ by using the map $\Phi$ and proving the multiplication formulas of cluster characters. Moreover, we compare the defined cluster characters with those given by Palu \cite{Pa} and Fu--Keller \cite{Chan}. At the end, as an example, we take $\mathscr{C}$ to be the category of finite-dimensional modules over the preprojective algebra of a Dynkin quiver of type $\mathbb{A}_{2}$ and compare our cluster characters with those given by Fu--Keller.

Recall that a category is called {\em Krull--Schmidt} if any object can be written as a finite direct sum of objects whose endomorphism rings are local. Throughout the paper, let $k$ be an algebraically closed field. We assume that all considered categories are skeletally small, $k$-linear and Krull--Schmidt, and the subcategories are full and closed under isomorphisms. Let $\mathscr{C}$ be an additive category, for an object $X\in\mathscr{C}$, we denote by $\End_{\mathscr{C}}(X)$ and $[X]$  the endomorphism ring and the isomorphism class of $X$, respectively, and denote by $\add X$ the full subcategory of $\mathscr{C}$ formed by the finite direct sums of direct summands of $X$. For a $k$-algebra $\Lambda$, we denote by $\mod \Lambda$ the category of finitely generated $\Lambda$-modules.

\section{Preliminaries}
In this section, let us recall some notions and properties of extriangulated categories and indices from \cite{Na} and \cite{Wl}.

\subsection{Extriangulated categories}
We briefly recall the definition and some basic properties of extriangulated categories. For more details on extriangulated categories we refer to \cite[Section 2]{Na}.

An {\em extriangulated category} $(\mathscr{C}, \mathbb{E},\mathfrak{s})$ is a triple consisting of the following data satisfying certain axioms:

(1) $\mathscr{C}$ is an additive category and $\mathbb{E}:\mathscr{C}^{\rm op}\times \mathscr{C}\rightarrow {\bf Ab}$ is a bifunctor.

(2)  $\mathfrak{s}$ is an additive realization of $\mathbb{E}$, which  defines the class of conflations satisfying the axioms
(ET1)-(ET4), (ET3)$^{\rm op}$ and (ET4)$^{\rm op}$ (see \cite[Definition 2.12]{Na}).

Exact categories, triangulated categories are extriangulated categories (cf. \cite[Example 2.13]{Na}). Given $\delta\in\mathbb{E}(C,A)$, if $\mathfrak{s}(\delta)=[A\stackrel{x}{\longrightarrow}B\stackrel{y}{\longrightarrow}C]$, then the sequence $A\stackrel{x}{\longrightarrow}B\stackrel{y}{\longrightarrow}C$ is called a {\em conflation}, $x$ is called an {\em inflation} and $y$ is called a {\em deflation}. In this case, we call $$A\stackrel{x}{\longrightarrow}B\stackrel{y}{\longrightarrow}C\stackrel{\delta}\dashrightarrow$$
is an $\mathbb{E}$-triangle. An object $P$ in $\mathscr{C}$ is called {\em projective} if $\mathbb{E}(P,\mathscr{C})=0$. Denote by $\mathcal{P}$ the full subcategory of projective objects in $\mathscr{C}$. We say that $\mathscr{C}$ {\em has enough projective objects} if for any object $M\in\mathscr{C}$, there exists an $\mathbb{E}$-triangle $$A\stackrel{}{\longrightarrow}P\stackrel{}{\longrightarrow}M\stackrel{}\dashrightarrow$$
such that $P\in\mathcal{P}$. Dually, we define {\em injective objects} and that $\mathscr{C}$ {\em has enough injective objects}. According to \cite{Zhou}, the extriangulated category $\mathscr{C}$ is called {\em 2-Calabi-Yau}, if there exists a bifunctorial isomorphism $$\mathbb{E}(N,M)\cong {\rm D}\mathbb{E}(M,N)$$ for any $M,N\in\mathscr{C}$, where ${\rm D}=\Hom_k(-,k)$ is the usual $k$-linear duality.

Given two subcategories $\mathcal{T},\mathcal{F}$ of $\mathscr{C}$, set
$$\mathcal{T}\ast \mathcal{F}=\{M\in \mathscr{C}~|~\text{there is an $\mathbb{E}$-triangle}~T\stackrel{}{\longrightarrow}M\stackrel{}{\longrightarrow}F\stackrel{}\dashrightarrow~\text{with}~T\in\mathcal{T},F\in\mathcal{F} \}$$
and
$$\Cone(\mathcal{T},\mathcal{F})=\{M\in \mathscr{C}~|~\text{there is an $\mathbb{E}$-triangle}~T\stackrel{}{\longrightarrow}F\stackrel{}{\longrightarrow}M\stackrel{}\dashrightarrow\text{with}~T\in\mathcal{T},F\in\mathcal{F} \}.$$
Until the end of the paper, we always assume that $\mathscr{C}$ is an extriangulated category, unless otherwise stated.

\subsection{Indices and opposite indices}
Let $\mathcal{T}$ be a subcategory of $\mathscr{C}$. A morphism $f:X\rightarrow T$ with $T\in\mathcal{T} $ is called a {\em left $\mathcal{T}$-approximation} of $X$ if $\Hom_{\mathscr{C}}(f,\mathcal{T})$ is an epimorphism. The subcategory $\mathcal{T}$ is called {\em covariantly finite} if every object in $\mathscr{C}$ admits a left $\mathcal{T}$-approximation. Dually, we can define the notions of {\em right  $\mathcal{T}$-approximations} and {\em contravariantly finite} subcategories. We call a subcategory {\em functorially finite} if it is both covariantly finite and contravariantly finite.

For each positive integer $n$, Gorsky--Nakaoka--Palu \cite{Go} defined the higher positive extensions $\mathbb{E}^n(-,-)$ in an extriangulated category. If $\mathscr{C}$ has enough projective objects or enough injective objects, these $\mathbb{E}^n$ are isomorphic to those defined in \cite{LN} (cf. \cite[Remark 3.4]{Go}). For a collection $\mathcal{X}$ of objects in $\mathscr{C}$, define the full subcategory
$$^{\perp_{[1,n]} }\mathcal{X}=\{M\in \mathscr{C}~|~\mathbb{E}^{i}(M,\mathcal{X})=0~\text{for}~i=1,2\cdots n \}.$$
Dually, one defines the full subcategory $\mathcal{X}^{\perp_{[1,n]} }$ in $\mathscr{C}$.
\begin{definition} (\cite[Definition 3.1]{Wl}) A {\em tower} of triangles in $\mathscr{C}$ is a diagram of the form
\begin{equation*}
\xymatrix{
                & T_{1}\ar[dr]^{g_{1}} \ar[rr]^{f_{1}g_{1}}      && T_{2}\ar[dr]^{g_{2}}      &\cdots&\cdots&\cdots&T_{n-1}\ar[dr]^{g_{n-1}} \\
  T_{0}\ar[ur]^{f_{0}}  & & K_{1}\ar@{-->}[ll]_{\delta_{0}}\ar[ur]^{f_{1}}     &&K_{2}\ar@{-->}[ll]_{\delta_{1}}     &\cdots&K_{n-2}\ar[ur]^{f_{n-2}}   &  &\ar@{-->}[ll]_{\delta_{n-2}} T_{n} }
\end{equation*}
where a dotted arrow $X\stackrel{\delta}\dashleftarrow Y$ means an extension $\delta\in\mathbb{E}(Y,X)$, each oriented triangle is an $\mathbb{E}$-triangle, and each non-oriented triangle is commutative.
\end{definition}
A subcategory $\mathcal{T}$ of $\mathscr{C}$ is called {\em n-cluster tilting} if
$\mathcal{T}$ is functorially finite and $\mathcal{T}=\mathcal{T}^{\perp_{[1,n-1]}}={^{\perp_{[1,n-1]}}}\mathcal{T}$. An object $T$ in $\mathscr{C}$ is called {\em n-cluster tilting} if $\add T$ is an $n$-cluster tilting subcategory of $\mathscr{C}$.

Let $G(\mathscr{C})$ be the free abelian group spanned by the isomorphism classes of objects in $\mathscr{C}$. Define the {\em split Grothendieck group} of $\mathscr{C}$  to be
$$K_{0}^{\rm sp}(\mathscr{C})=G(\mathscr{C})/\lr{[A\oplus B]-[A]-[B]~|~A,B\in\mathscr{C}}$$
and the {\em Grothendieck group} of $\mathscr{C}$ to be
$$K_{0}(\mathscr{C})=K_{0}^{\rm sp}(\mathscr{C})/\lr{[A]-[B]+[C]~|~A\rightarrow B\rightarrow C\dashrightarrow~\text{is an $\mathbb{E}$-triangle}}.$$

From now on, we always assume that $\mathscr{C}$ satisfies the following conditions (see \cite[Condition 5.8]{Na}).

\begin{condition}\label{WIC} (WIC)
(1) Let $f:X\rightarrow Y$ and  $g:Y\rightarrow Z$ be any pair of composable morphisms in $\mathscr{C}$. If $gf$ is an inflation, then $f$ is an inflation.

(2) Let $f:X\rightarrow Y$ and  $g:Y\rightarrow Z$ be any pair of composable morphisms in $\mathscr{C}$. If $gf$ is a deflation, then $g$ is a deflation.
\end{condition}
If $\mathscr{C}$ is a triangulated category or weakly idempotent complete exact category, then Condition \ref{WIC} is satisfied.

Given an $n$-cluster tilting subcategory $\mathcal{T}$ of $\mathscr{C}$. By \cite[Lemma 3.4]{Wl}, for any $M\in\mathscr{C}$, we have towers
\begin{equation*}
\xymatrix{
                & T_{n-2}\ar[dr]^{f_{n-2}} \ar[rr]^{}      && T_{n-3}\ar[dr]^{f_{n-3}}      &\cdots&\cdots&\cdots&T_{0}\ar[dr]^{f_{0}} \\
  T_{n-1}\ar[ur]^{}  & & K_{n-2}\ar@{-->}[ll]^{}\ar[ur]^{}     &&K_{n-3}\ar@{-->}[ll]^{}     &\cdots&K_{1}\ar[ur]^{}   &  &\ar@{-->}[ll]^{} M }
\end{equation*}
and
\begin{equation*}
\xymatrix{
                & T_{0}'\ar[dr]^{} \ar[rr]^{}      && T_{1}'\ar[dr]^{}      &\cdots&\cdots&\cdots&T_{n-2}'\ar[dr]^{} \\
 M\ar[ur]^{g_{0}}  & & K'_{1}\ar@{-->}[ll]^{}\ar[ur]^{g_{1}}     &&K'_{2}\ar@{-->}[ll]^{}     &\cdots&K'_{n-2}\ar[ur]^{g_{n-2}}   &  &\ar@{-->}[ll]^{} T_{n-1}', }
\end{equation*}
where each $T_{i},T'_{i}\in \mathcal{T}$, $K_{i}\in\mathcal{T}^{\perp_{[1,i]}}$ and  $K'_{i}\in{^{\perp_{[1,i]}}}\mathcal{T}$. In particular, $f_{i}$ is a minimal right $\mathcal{T}$-approximation and $g_{i}$ is a minimal left $\mathcal{T}$-approximation. The {\em index} of $M$ with respect to $\mathcal{T}$ is the element
$$\ind_{\mathcal{T}}(M)=\sum_{i=0}^{n-1}(-1)^{i}[T_{i}]\in K_{0}^{\rm sp}(\mathcal{T}).$$
Similarly, the {\em opposite index} of $M$ with respect to $\mathcal{T}$ is the element
$$\ind^{\rm op}_{\mathcal{T}}(M)=\sum_{i=0}^{n-1}(-1)^{i}[T'_{i}]\in K_{0}^{\rm sp}(\mathcal{T}).$$

The following results from \cite[Lemma 3.8]{Wl} will be frequently used in Section 3.
\begin{proposition}\label{useful} Let $\mathcal{T}$ be an $n$-cluster tilting subcategory of $\mathscr{C}$.

$(1)$ For any $M,N\in\mathscr{C}$, we have
$$\ind_{\mathcal{T}}(M\oplus N)=\ind_{\mathcal{T}}(M)+\ind_{\mathcal{T}}(N).$$

$(2)$ Given an $\mathbb{E}$-triangle \begin{equation*}
X\stackrel{f}{\longrightarrow}L\stackrel{g}{\longrightarrow}M\stackrel{}\dashrightarrow.
\end{equation*}

\begin{itemize}
\item [(i)] If $g$ is a right $\mathcal{T}$-approximation, then
$$ [L]=\ind_{\mathcal{T}}(X)+\ind_{\mathcal{T}}(M).$$
\item [(ii)] If $f$ is a left $\mathcal{T}$-approximation, then
$$ [L]=\ind^{\rm op}_{\mathcal{T}}(X)+\ind^{\rm op}_{\mathcal{T}}(M).$$
\item [(iii)] If $X\in \mathcal{T}^{\perp_{1}}$, then
$$ \ind_{\mathcal{T}}(L)=\ind_{\mathcal{T}}(X)+\ind_{\mathcal{T}}(M).$$
If $M\in {^{\perp_{1}}}\mathcal{T}$, then
$$ \ind^{\rm op}_{\mathcal{T}}(L)=\ind^{\rm op}_{\mathcal{T}}(X)+\ind^{\rm op}_{\mathcal{T}}(M).$$
\end{itemize}
\end{proposition}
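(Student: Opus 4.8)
The plan is to prove all statements by induction on $n$, the cluster-tilting degree, using the towers of $\mathbb{E}$-triangles furnished by \cite[Lemma 3.4]{Wl} and reducing each claim to statements about single $\mathbb{E}$-triangles. First I would handle part $(1)$: given minimal right $\mathcal{T}$-approximation towers for $M$ and for $N$, their direct sum is a right $\mathcal{T}$-approximation tower for $M\oplus N$ (the direct sum of minimal approximations need not be minimal, but after stripping off contractible summands one recovers the minimal tower for $M\oplus N$, and the index is insensitive to this since $[T_i]$ is taken in $K_0^{\rm sp}(\mathcal{T})$ and contractible pieces contribute cancelling terms in the alternating sum). So additivity is essentially formal once one knows $\ind_{\mathcal{T}}$ is well defined, which is part of the cited lemma. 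The opposite-index additivity is the same argument applied to left-approximation towers.

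For part $(2)$(i), I would argue as follows. Since $g\colon L\to M$ is a right $\mathcal{T}$-approximation, choose a tower realizing $\ind_{\mathcal{T}}(M)$ whose first term is built from $g$; more precisely, complete $g$ to an $\mathbb{E}$-triangle $K_1\to T_0\to M\dashrightarrow$ with $T_0\in\mathcal{T}$ a (not necessarily minimal) right $\mathcal{T}$-approximation, and use $X\to L\to M\dashrightarrow$ together with the $(ET4)$/$(ET4)^{\rm op}$ octahedral axioms to produce an $\mathbb{E}$-triangle $K_1\to L\oplus(\text{something in }\mathcal T)\to \cdots$ or, more cleanly, an $\mathbb{E}$-triangle relating $X$, $K_1$ and a projective-type object. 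The upshot I am after is an $\mathbb{E}$-triangle $K_1\to L'\to X\dashrightarrow$ where $L'$ differs from $L$ by an object of $\mathcal{T}$, together with the observation that a right $\mathcal{T}$-approximation tower of $X$ is obtained by grafting the tower of $K_1$ onto the first step, so that $\ind_{\mathcal{T}}(X)=[T_0']-\ind_{\mathcal{T}}(K_1)$ for the relevant $T_0'\in\mathcal T$; combining these bookkeeping identities in $K_0^{\rm sp}(\mathcal T)$ yields $[L]=\ind_{\mathcal{T}}(X)+\ind_{\mathcal{T}}(M)$. Part $(2)$(ii) is dual, using left approximations and the opposite index.

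For part $(2)$(iii), the hypothesis $X\in\mathcal{T}^{\perp_1}$ is what makes $f$ automatically compatible with approximations: given a right $\mathcal{T}$-approximation $T_0\to M$ with kernel-type object $K_1\in\mathcal{T}^{\perp_{[1,1]}}$, one lifts it along $g$ and uses $\mathbb{E}^1(T_0,X)=0$ (which holds as $\mathcal T\subseteq{}^{\perp_1}\!X$, i.e. $X\in\mathcal T^{\perp_1}$) to split the relevant obstruction, producing a right $\mathcal{T}$-approximation $T_0\to L$ fitting into an $\mathbb{E}$-triangle whose third term maps onto both $X$ and $K_1$ appropriately. Iterating down the tower — at each stage the vanishing $\mathbb{E}^i(T_i, X)=0$ from $X\in\mathcal T^{\perp_1}$ combined with $K_j\in\mathcal T^{\perp_{[1,j]}}$ keeps the construction going — gives a tower for $L$ whose terms are the "sum" of those for $X$ and for $M$, whence the index identity; the opposite statement is dual, using $M\in{}^{\perp_1}\mathcal T$.

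I expect the main obstacle to be the octahedral bookkeeping in $(2)$(i) and $(2)$(iii): in an extriangulated category one must be careful that the $\mathbb{E}$-triangles produced by $(ET4)$ really do assemble into a legitimate tower with the correct perpendicularity conditions $K_i\in\mathcal T^{\perp_{[1,i]}}$, and that passing between a chosen (possibly non-minimal) approximation tower and the minimal one does not corrupt the alternating sum — this is where the identities $[A\oplus B]=[A]+[B]$ in $K_0^{\rm sp}(\mathcal T)$ and the cancellation of contractible summands must be invoked explicitly. Everything else is a routine translation of the classical triangulated-category arguments of Palu into the extriangulated setting, with $\mathbb{E}^n$ replacing shifted $\Hom$.
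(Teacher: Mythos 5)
First, note that the paper does not prove this proposition at all: it is quoted verbatim from \cite[Lemma 3.8]{Wl}, so there is no in-paper argument to match yours against. Your overall strategy (towers, reduction to single $\mathbb{E}$-triangles, splitting via $\mathbb{E}$-vanishing) is the natural one, and part $(1)$ is essentially fine once you pin down the standard fact that a direct sum of minimal right $\mathcal{T}$-approximations is again minimal (equivalently, that the alternating sum is insensitive to non-minimal towers, which is the cancellation you describe).

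The genuine gap is in $(2)$(i). The intermediate objects you aim for do not exist in the form stated: there is no $\mathbb{E}$-triangle ``$K_1\to L'\to X$'' arising from the comparison, and the identity ``$\ind_{\mathcal{T}}(X)=[T_0']-\ind_{\mathcal{T}}(K_1)$'' is not what the construction produces, so the ``bookkeeping identities'' you propose to combine are not yet on the table. The missing observation is that the cocone of \emph{any} right $\mathcal{T}$-approximation deflation of $M$ lies in $\mathcal{T}^{\perp_1}$: applying $\Hom_{\mathscr{C}}(T',-)$ to $X\to L\xrightarrow{g}M\dashrightarrow$ gives $\Hom(T',L)\twoheadrightarrow\Hom(T',M)\to\mathbb{E}(T',X)\to\mathbb{E}(T',L)=0$, whence $\mathbb{E}(\mathcal{T},X)=0$, and likewise $K_1\in\mathcal{T}^{\perp_1}$ for the minimal approximation triangle $K_1\to T_0\xrightarrow{\beta}M\dashrightarrow$. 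Forming the comparison object $E$ via (ET4)$^{\rm op}$ yields $\mathbb{E}$-triangles $X\to E\to T_0\dashrightarrow$ and $K_1\to E\to L\dashrightarrow$ (here $L\in\mathcal{T}$ since $g$ is a right $\mathcal{T}$-approximation, which is also why $[L]$ makes sense); both split by the $\perp_1$ observations, giving $X\oplus T_0\cong K_1\oplus L$, and then $(1)$ together with $\ind_{\mathcal{T}}(M)=[T_0]-\ind_{\mathcal{T}}(K_1)$ finishes the proof. No octahedral grafting of towers is needed for (i). For $(2)$(iii) your inductive descent is the right idea, but note one concrete point you gloss over: the lift $\tilde\beta\colon T_0\to L$ of $\beta$ along $g$ (which exists because $\mathbb{E}(T_0,X)=0$) is in general \emph{not} a right $\mathcal{T}$-approximation of $L$; one must approximate $L$ by $T_0\oplus T_0^X$ where $T_0^X\to X$ approximates $X$, and only then does the cocone analysis iterate down the tower.
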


\section{Indices and Grothendieck groups }
In this section, we assume that $\mathscr{C}$ is a {\em Frobenius extriangulated category}, i.e. $\mathscr{C}$  has enough projectives, enough injectives and the projectives coincide with the injectives. The stable category $\mathscr{C}/\mathcal{P}$ is denoted by $\mathcal{A}$. By \cite[Corollory 7.4]{Na}, $\mathcal{A}$ is a triangulated category with the suspension functor denoted by $\Sigma$. Let $\pi: \mathscr{C}\rightarrow\mathcal{A}$ be the projection functor.

\begin{lemma}\label{L-31} The triangles in $\mathcal{A}$ are exactly induced by the $\mathbb{E}$-triangles in $\mathscr{C}$.
\end{lemma}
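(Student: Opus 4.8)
The statement asserts that the triangulated structure on the stable category $\mathcal{A}=\mathscr{C}/\mathcal{P}$ coming from the Frobenius extriangulated structure (via \cite[Corollary 7.4]{Na}) has its distinguished triangles precisely those of the form $\pi(A)\to\pi(B)\to\pi(C)\to\Sigma\pi(A)$ for some $\mathbb{E}$-triangle $A\to B\to C\dashrightarrow$ in $\mathscr{C}$. The plan is to unwind the construction of the suspension functor and the triangulation in \cite{Na} and check both inclusions directly.

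First I would recall the construction of $\Sigma$ and of the distinguished triangles in $\mathscr{C}/\mathcal{P}$. Since $\mathscr{C}$ has enough injectives and the injectives are exactly the projectives $\mathcal{P}$, every object $A$ fits into an $\mathbb{E}$-triangle $A\xrightarrow{\iota} I\xrightarrow{p} \Sigma A\dashrightarrow$ with $I\in\mathcal{P}$, and $\Sigma A$ is well defined up to isomorphism in $\mathcal{A}$; this is the suspension. By \cite[Corollary 7.4]{Na} (or the analogue of Happel's theorem for Frobenius extriangulated categories), the distinguished triangles in $\mathcal{A}$ are \emph{defined} to be those isomorphic in $\mathcal{A}$ to the ``standard'' triangles built from an $\mathbb{E}$-triangle $A\xrightarrow{x} B\xrightarrow{y} C\dashrightarrow$ together with a choice of injective hull $A\to I_A\to\Sigma A\dashrightarrow$: one takes the morphism $C\to\Sigma A$ obtained from the pushout/factorization, giving $\pi A\xrightarrow{\pi x}\pi B\xrightarrow{\pi y}\pi C\to\Sigma\pi A$. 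So one inclusion — every triangle of the stated form is a distinguished triangle in $\mathcal{A}$ — is essentially immediate once this construction is spelled out, because the connecting morphism $\pi C\to \Sigma\pi A$ is forced (up to the isomorphisms in the octahedron/pushout diagram) by the $\mathbb{E}$-triangle itself.

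For the converse, I would argue that any distinguished triangle $X\xrightarrow{u} Y\xrightarrow{v} Z\xrightarrow{w}\Sigma X$ in $\mathcal{A}$ is isomorphic to one coming from an $\mathbb{E}$-triangle. Lift $u:X\to Y$ to a morphism $\tilde u$ in $\mathscr{C}$ (possibly after adding a projective summand to $Y$, which does not change the class in $\mathcal{A}$); then realize $\tilde u$ as an inflation by the standard trick: form the $\mathbb{E}$-triangle $X\xrightarrow{\binom{\tilde u}{\iota}} Y\oplus I_X\to Z'\dashrightarrow$, where $X\xrightarrow{\iota} I_X$ is an inflation into a projective-injective. This $\mathbb{E}$-triangle projects to a distinguished triangle $X\xrightarrow{u}Y\to Z'\to\Sigma X$ in $\mathcal{A}$ with the same first morphism as the given one, and by the axiom (TR3)/uniqueness of cones up to isomorphism in a triangulated category, $Z\cong Z'$ compatibly, so the original triangle is (isomorphic to) one induced by an $\mathbb{E}$-triangle. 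The one point needing care is that $X\xrightarrow{\binom{\tilde u}{\iota}}Y\oplus I_X$ really is an inflation: this follows from (ET2)/closure of inflations under the relevant operations together with Condition \ref{WIC}, since $X\xrightarrow{\iota}I_X$ is an inflation and composing/co-projecting preserves this.

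The main obstacle, and the place to be most careful, is precisely this last lifting-and-rectification argument: controlling the difference between working in $\mathscr{C}$ and in $\mathcal{A}$, i.e.\ making sure that replacing $Y$ by $Y\oplus I_X$, lifting morphisms, and absorbing projective summands all happen coherently so that the induced isomorphism of triangles is genuinely a morphism of triangles in $\mathcal{A}$. Everything else is a matter of transcribing the Frobenius-to-triangulated construction of \cite[Corollary 7.4]{Na} and invoking the octahedral axiom; Condition \ref{WIC} is exactly what guarantees that the ad hoc inflations produced during the rectification are legitimate, so it should be cited at that step.
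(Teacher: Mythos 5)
Your proposal is correct and follows essentially the same route as the paper: the forward direction unwinds the standard construction of the triangulation on $\mathcal{A}$, and the converse realizes the lifted first morphism as an inflation by passing to $Y\oplus I_X$ and then identifies the resulting cone with the given one. The only cosmetic difference is that the paper packages the converse as a single application of a result of Liu--Nakaoka producing the $\mathbb{E}$-triangle $A\to B\oplus I(A)\to C\dashrightarrow$ directly, whereas you recover the same conclusion via (TR3) and uniqueness of cones; both are fine.
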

\begin{proof} Let
$$A\stackrel{}{\longrightarrow}B\stackrel{}{\longrightarrow}C\stackrel{}\dashrightarrow$$
be an $\mathbb{E}$-triangle in $\mathscr{C}$, then there exists a commutative diagram of conflations
\begin{equation*}
\xymatrix{
   A\ar[d]_{} \ar[r]^{} &  B\ar[d]_{} \ar[r]^{} &  C\ar@{=}[d] \\
   I(A)\ar[d]_{} \ar[r]^{} &  I(A)\oplus C \ar[d]_{} \ar[r]^{} & C \\
   \Sigma A\ar@{=}[r] &  \Sigma A&    }
\end{equation*}
with $I(A)\in\mathcal{P}$ such that
\begin{equation}\label{1-1}
A\stackrel{}{\longrightarrow}B\stackrel{}{\longrightarrow}C{\longrightarrow}\Sigma A
\end{equation}
is a triangle in $\mathcal{A}$. Conversely, given a triangle (\ref{1-1}) in $\mathcal{A}$,  we have a commutative diagram
\begin{equation*}
\xymatrix{
 &  A\ar[d]^{} \ar[r]^-{} & I(A)\ar[d]^{} \ar[r]^-{} & \Sigma A\ar@{=}[d]  &  \\
&  B \ar[r]^-{} & C \ar[r]^-{} & \Sigma A &  }
\end{equation*}
with $I(A)\in\mathcal{P}$. By \cite[Proposition 1.20]{LN}, we obtain an $\mathbb{E}$-triangle
$$A\stackrel{}{\longrightarrow}B\oplus I(A)\stackrel{}{\longrightarrow}C\stackrel{}\dashrightarrow$$
whose image under the  functor $\pi$ is isomorphic to the given triangle (\ref{1-1}).
\end{proof}
\begin{remark}\label{dimension} Given an object $N\in\mathscr{C}$. Since $\mathscr{C}$ is Frobenius, for each positive integer $i$, we have an $\mathbb{E}$-triangle
$$\Sigma^{i-1}N\stackrel{}{\longrightarrow}P_{i}\stackrel{}{\longrightarrow}\Sigma^{i}N\stackrel{}\dashrightarrow$$ with $P_i\in\mathcal{P}$. Thus, by \cite{LN},
$\mathbb{E}^{i}(M,N)\cong \mathbb{E}(M,\Sigma^{i-1}N)$ for any $M\in\mathscr{C}$.
\end{remark}

By the definition of $n$-cluster tilting subcategories, we have the following
\begin{lemma}\label{cluster} Let $\mathcal{T}$ be an $n$-cluster tilting subcategory of $\mathscr{C}$ and $M\in\mathscr{C}$.

$(1)$ For any $X, Y\in \mathcal{T}$ and $1\leq i\leq n-1$, we have $$\Hom_{\mathcal{A}}(X,\Sigma^{i}Y)=\Hom_{\mathcal{A}}(\Sigma^{i}X,Y)=0.$$

$(2)$ If $\Hom_{\mathcal{A}}(M,\Sigma^{i}N)=0$ for any $N\in \mathcal{T}$ and $1\leq i\leq n-1$, then $M\in {^{\perp_{[1,n-1]}}}\mathcal{T}$.

$(2')$ If $\Hom_{\mathcal{A}}(\Sigma^{i}M,N)=0$ for any $N\in \mathcal{T}$ and $1\leq i\leq n-1$, then $M\in \mathcal{T}{^{\perp_{[1,n-1]}}}$.

$(3)$
 $\mathcal{T}$ is an $n$-cluster tilting subcategory of $\mathcal{A}$.
\end{lemma}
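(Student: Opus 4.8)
The plan is to pull the defining properties of an $n$-cluster tilting subcategory back and forth between $\mathscr{C}$ and its triangulated stable category $\mathcal{A}=\mathscr{C}/\mathcal{P}$ by means of the natural isomorphism
$$\mathbb{E}^{i}(M,N)\;\cong\;\Hom_{\mathcal{A}}(M,\Sigma^{i}N)\qquad(M,N\in\mathscr{C},\ i\geq 1),$$
obtained by splicing Remark \ref{dimension} (which yields $\mathbb{E}^{i}(M,N)\cong\mathbb{E}(M,\Sigma^{i-1}N)$) with the identification $\mathbb{E}(A,B)\cong\Hom_{\mathcal{A}}(A,\Sigma B)$: apply $\Hom_{\mathscr{C}}(A,-)$ to an $\mathbb{E}$-triangle $B\to I(B)\to\Sigma B\dashrightarrow$ with $I(B)\in\mathcal{P}$ and use that, in a Frobenius extriangulated category, a morphism $A\to\Sigma B$ factors through a projective precisely when it factors through $I(B)$ (here $\mathcal{P}=\mathcal{I}$ is crucial). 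I would also note at the outset that $\mathcal{P}\subseteq\mathcal{T}$, since $\mathbb{E}^{i}(\mathcal{T},\mathcal{P})=0$ forces $\mathcal{P}\subseteq\mathcal{T}^{\perp_{[1,n-1]}}=\mathcal{T}$, and that because $\mathscr{C}$ and $\mathcal{A}$ have the same objects, the isomorphism above identifies the orthogonal subcategories ${}^{\perp_{[1,n-1]}}_{\mathscr{C}}\mathcal{T}$, $\mathcal{T}^{\perp_{[1,n-1]}}_{\mathscr{C}}$ with their counterparts ${}^{\perp_{[1,n-1]}}_{\mathcal{A}}\mathcal{T}$, $\mathcal{T}^{\perp_{[1,n-1]}}_{\mathcal{A}}$ computed with $\mathbb{E}^{i}_{\mathcal{A}}(-,-)=\Hom_{\mathcal{A}}(-,\Sigma^{i}-)$.

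Given this, $(1)$, $(2)$ and $(2')$ are unwindings. For $(1)$: for $X,Y\in\mathcal{T}$ and $1\leq i\leq n-1$, both $\mathbb{E}^{i}(X,Y)=0$ and $\mathbb{E}^{i}(Y,X)=0$ hold, since $\mathcal{T}={}^{\perp_{[1,n-1]}}\mathcal{T}=\mathcal{T}^{\perp_{[1,n-1]}}$; pushing these through the isomorphism gives the two stated vanishings of stable Hom groups. For $(2)$: the hypothesis $\Hom_{\mathcal{A}}(M,\Sigma^{i}N)=0$ for all $N\in\mathcal{T}$ and $1\leq i\leq n-1$ is, under the isomorphism, exactly $\mathbb{E}^{i}(M,\mathcal{T})=0$ for $1\leq i\leq n-1$, i.e. $M\in{}^{\perp_{[1,n-1]}}\mathcal{T}$. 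For $(2')$: this is the mirror statement; the cleanest way is to run $(2)$ in $\mathscr{C}^{\mathrm{op}}$, which is again Frobenius extriangulated with stable category $\mathcal{A}^{\mathrm{op}}$ and in which $\mathcal{T}$ is still $n$-cluster tilting, the condition $\mathcal{T}={}^{\perp_{[1,n-1]}}\mathcal{T}=\mathcal{T}^{\perp_{[1,n-1]}}$ being self-dual.

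For $(3)$ I would check the two requirements in $\mathcal{A}$ separately. Functorial finiteness: given $M\in\mathscr{C}$, choose a right $\mathcal{T}$-approximation $f\colon T\to M$ in $\mathscr{C}$ (possible since $\mathcal{T}$ is functorially finite in $\mathscr{C}$); for any morphism $\bar{g}\colon T'\to M$ in $\mathcal{A}$ with $T'\in\mathcal{T}$, lift it to $g\colon T'\to M$ in $\mathscr{C}$, write $g=fh$, and project back to obtain $\bar{g}=\pi(f)\,\pi(h)$, so $\pi(f)$ is a right $\mathcal{T}$-approximation in $\mathcal{A}$; the left-approximation case is dual. Orthogonality: by $(1)$ we have $\mathcal{T}\subseteq{}^{\perp_{[1,n-1]}}_{\mathcal{A}}\mathcal{T}$ and $\mathcal{T}\subseteq\mathcal{T}^{\perp_{[1,n-1]}}_{\mathcal{A}}$; conversely, since (as noted above) these $\mathcal{A}$-orthogonals coincide with the $\mathscr{C}$-orthogonals, $(2)$ gives ${}^{\perp_{[1,n-1]}}_{\mathcal{A}}\mathcal{T}\subseteq\mathcal{T}$ and $(2')$ gives $\mathcal{T}^{\perp_{[1,n-1]}}_{\mathcal{A}}\subseteq\mathcal{T}$. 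Hence $\mathcal{T}={}^{\perp_{[1,n-1]}}_{\mathcal{A}}\mathcal{T}=\mathcal{T}^{\perp_{[1,n-1]}}_{\mathcal{A}}$, and $\mathcal{T}$ is $n$-cluster tilting in $\mathcal{A}$.

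The step that takes the most care is the set-up of the isomorphism $\mathbb{E}^{i}(M,N)\cong\Hom_{\mathcal{A}}(M,\Sigma^{i}N)$ and the bookkeeping of the dimension shifts underneath it: one must keep track of whether the shift sits on the first or second variable (injective coresolutions put it on the second, projective resolutions on the first) so that both equalities of $(1)$ and both of $(2)$, $(2')$ come out with the correct directions, and one must make sure the identification $\mathbb{E}\cong\Hom_{\mathcal{A}}(-,\Sigma -)$ is genuinely available — which is precisely where the Frobenius hypothesis $\mathcal{P}=\mathcal{I}$ is used. Once that is pinned down, the rest is a routine transfer of approximations and $\mathbb{E}$-triangles along $\pi$.
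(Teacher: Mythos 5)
Your overall strategy --- set up the identification $\mathbb{E}^{i}(M,N)\cong\Hom_{\mathcal{A}}(M,\Sigma^{i}N)$ once and then read off (1), (2), (2$'$) --- is essentially the paper's route, just packaged more systematically: the paper proves the two directions of this identification by hand (lifting a triangle of $\mathcal{A}$ to an $\mathbb{E}$-triangle to see that a vanishing extension group kills the connecting morphism, and lifting a section of $\mathcal{A}$ to a section of $\mathscr{C}$ for the converse). Your construction of the isomorphism via the conflation $B\to I(B)\to\Sigma B$ is correct, as are the observation $\mathcal{P}\subseteq\mathcal{T}$ and the transfer of approximations for the functorial finiteness part of (3).

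There is, however, a genuine gap in how you dispose of the second equality of (1) and of (2$'$). Your isomorphism always places the suspension on the \emph{second} argument: from $\mathbb{E}^{i}(Y,X)=0$ it yields $\Hom_{\mathcal{A}}(Y,\Sigma^{i}X)=0$, and running (2) in $\mathscr{C}^{\mathrm{op}}$ (where the suspension of $\mathcal{A}^{\mathrm{op}}$ is the loop functor of $\mathcal{A}$) yields the implication ``$\Hom_{\mathcal{A}}(N,\Sigma^{i}M)=0$ for all $N\in\mathcal{T}$ implies $M\in\mathcal{T}^{\perp_{[1,n-1]}}$''. Neither of these is the group written in the statement: $\Hom_{\mathcal{A}}(\Sigma^{i}X,Y)\cong\Hom_{\mathcal{A}}(X,\Sigma^{-i}Y)$ is a different group from $\Hom_{\mathcal{A}}(Y,\Sigma^{i}X)$, and it need not vanish for $X,Y$ in a cluster tilting subcategory. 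For instance, in the cluster category of $A_{2}$ (a Frobenius extriangulated category with $\mathcal{P}=0$, so $\mathcal{A}=\mathscr{C}$) with $T=P_{1}\oplus P_{2}$, the $2$-Calabi--Yau property gives $\Hom(\Sigma P_{1},P_{2})\cong D\Hom(P_{2},\Sigma^{3}P_{1})\cong D\,\mathrm{End}(P_{2})\neq 0$. So ``pushing through the isomorphism'' cannot deliver the literal claims; what it delivers is the version with the shift on the second variable, which is precisely what part (3) requires, since the $\mathcal{A}$-orthogonals of $\mathcal{T}$ are $\{M\mid\Hom_{\mathcal{A}}(M,\Sigma^{i}N)=0\}$ and $\{M\mid\Hom_{\mathcal{A}}(N,\Sigma^{i}M)=0\}$. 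You flag the variable bookkeeping as the delicate point but never carry it out; doing so exposes the mismatch. In fairness, the paper's own proof has the same character --- it proves (2) in full and claims (2$'$) ``similarly'', and the similar argument establishes the hypothesis $\Hom_{\mathcal{A}}(N,\Sigma^{i}M)=0$, so the printed statements of (1) and (2$'$) appear to contain a transposition --- but your write-up should state the corrected versions explicitly rather than assert that the literal ones follow.
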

\begin{proof}
$(1)$ Let $f\in\Hom_{\mathcal{A}}(X,\Sigma^{i}Y)$. By Lemma \ref{L-31}, we can lift the triangle in $\mathcal{A}$
\begin{equation*}
\Sigma^{i-1}Y\stackrel{}{\longrightarrow}H\stackrel{}{\longrightarrow}X\stackrel{f}{\longrightarrow}\Sigma^{i} Y
\end{equation*}
to an  $\mathbb{E}$-triangle in $\mathscr{C}$
\begin{equation*}
\Sigma^{i-1}Y\stackrel{}{\longrightarrow}H\oplus P\stackrel{}{\longrightarrow}X\stackrel{\delta}\dashrightarrow
\end{equation*}
for some $P\in \mathcal{P}$. By Remark \ref{dimension}, $\mathbb{E}(X,\Sigma^{i-1}Y)\cong \mathbb{E}^{i}(X,Y)=0$. It follows that $\delta=0$ and then $f=0$. Hence, $\Hom_{\mathcal{A}}(X,\Sigma^{i}Y)=0$. Similarly, we can prove $\Hom_{\mathcal{A}}(\Sigma^{i}X,Y)=0$.

(2) For any $N\in\mathcal{T}$, by Remark \ref{dimension}, we only need to show that  $\mathbb{E}(M,\Sigma^{i-1}N)=0$ for $1\leq i\leq n-1$. For any $\delta\in\mathbb{E}(M,\Sigma^{i-1}N)$, the $\mathbb{E}$-triangle
\begin{equation*}
\Sigma^{i-1}N\stackrel{f}{\longrightarrow}H\stackrel{g}{\longrightarrow}M\stackrel{\delta}\dashrightarrow
\end{equation*}
induces a triangle
\begin{equation*}
\Sigma^{i-1}N\stackrel{\pi(f)}{\longrightarrow}H\stackrel{\pi(g)}{\longrightarrow}M\stackrel{h}{\longrightarrow}\Sigma^{i} N
\end{equation*}
in $\mathcal{A}$. By hypothesis, we have $h=0$ and thus $\pi(f)$ is a section in $\mathcal{A}$.  It follows that there exists $s\in\Hom_{\mathscr{C}}(H,\Sigma^{i-1}N)$ such that $sf-1$ factors through some projective-injective object $P:$
$$\xymatrix{
   \Sigma^{i-1}N\ar[rr]^{sf-1} \ar[dr]_{t_{1}}
                &  &     \Sigma^{i-1}N \\
                &     P   \ar[ur]_{t_{2}}           }
$$
Since $P$ is injective, there exists $l\in\Hom_{\mathscr{C}}(H,P)$ such that $t_{1}=lf$. Then $(s-t_{2}l)f=1$ and thus $f$ is a section in $\mathscr{C}$. Hence, we obtain $\delta=0$. Similarly, we prove $(2')$.

(3) It is evident that $\mathcal{T}$ is functorially finite in $\mathcal{A}$.  Hence, the statement follows from  (1), (2) and $(2')$.
\end{proof}

In what follows, we always assume that $\mathcal{T}$ is an $n$-cluster tilting subcategory of $\mathscr{C}$.
Since $\mathscr{C}$ is Frobenius, for  any $X\in\mathscr{C}$, there exists an $\mathbb{E}$-triangle
$$X'\stackrel{}{\longrightarrow}P\stackrel{}{\longrightarrow}X\stackrel{}\dashrightarrow$$
with $P\in\mathcal{P}$. Define $\Theta(X)$ to be the class $\ind_{\mathcal{T}}(X')-[P]+\ind_{\mathcal{T}}(X)$ in $K_{0}^{\rm sp}(\mathcal{T})$. Suppose that we take another  $\mathbb{E}$-triangle
$$X''\stackrel{}{\longrightarrow}P'\stackrel{}{\longrightarrow}X\stackrel{}\dashrightarrow$$
with $P'\in\mathcal{P}$, then we have $X'\oplus P'\cong P\oplus X''$ by using \cite[Proposition 3.15]{Na}.  By Proposition \ref{useful}, $\ind_{\mathcal{T}}(X')-[P]=\ind_{\mathcal{T}}(X'')-[P']$. Thus, $\Theta(X)$ only depends on $X$.

The following proposition gives a characterization of $\Theta$.
\begin{proposition}\label{2333} For each $\mathbb{E}$-triangle with $P\in\mathcal{P}$
$$X'\stackrel{}{\longrightarrow}P\stackrel{}{\longrightarrow}X\stackrel{}\dashrightarrow,$$
we have $\Theta(X)=\ind_{\mathcal{T}}(X')-\ind^{\rm op}_{\mathcal{T}}(X')$. In particular, if $X\in\Cone(\mathcal{T},\mathcal{P})$, $\Theta(X)=0$.
\end{proposition}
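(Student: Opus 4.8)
Write $\mathfrak s(\delta)=[X'\xrightarrow{f}P\xrightarrow{g}X]$ for the given $\mathbb E$-triangle, so $P\in\mathcal P$ is projective-injective and $\mathcal P\subseteq\mathcal T$ (because $\mathcal T={}^{\perp_{[1,n-1]}}\mathcal T$). Since by definition $\Theta(X)=\ind_{\mathcal T}(X')-[P]+\ind_{\mathcal T}(X)$, the asserted formula is equivalent to the identity
\[
\ind_{\mathcal T}(X)+\ind^{\rm op}_{\mathcal T}(X')=[P],
\]
which I will call $(\star)$. Once $(\star)$ is proved for every $\mathbb E$-triangle of the above shape, the ``in particular'' follows immediately: if $X\in\Cone(\mathcal T,\mathcal P)$, apply the formula to an $\mathbb E$-triangle $T\to P_0\to X\dashrightarrow$ with $T\in\mathcal T$ and $P_0\in\mathcal P$; as an object of $\mathcal T$ has trivial $\mathcal T$-resolutions on both sides, $\Theta(X)=\ind_{\mathcal T}(T)-\ind^{\rm op}_{\mathcal T}(T)=[T]-[T]=0$.

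To prove $(\star)$ the plan is to choose a left $\mathcal T$-approximation $q_0\colon X'\to T_0'$ with $T_0'\in\mathcal T$ and push $\mathfrak s(\delta)$ out along $q_0$. As in the proof of Lemma~\ref{L-31} (using \cite[Proposition~1.20]{LN}) this yields a commutative diagram with bottom row an $\mathbb E$-triangle $T_0'\to W\to X\dashrightarrow$, together with an $\mathbb E$-triangle $X'\to P\oplus T_0'\to W\dashrightarrow$ whose left-hand map is again a left $\mathcal T$-approximation (its components are $-f$ and $q_0$, and $P\oplus T_0'\in\mathcal T$). Applying $\Hom_{\mathscr C}(-,\mathcal T)$ to this second triangle shows $\mathbb E^1(W,\mathcal T)$ is a cokernel of the epimorphism $\Hom_{\mathscr C}(P\oplus T_0',\mathcal T)\to\Hom_{\mathscr C}(X',\mathcal T)$, so $W\in{}^{\perp_1}\mathcal T$. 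Then Proposition~\ref{useful}(2) gives, on the triangle $X'\to P\oplus T_0'\to W\dashrightarrow$ (left-hand map a left approximation, or $W\in{}^{\perp_1}\mathcal T$), the relation $[P]+[T_0']=\ind^{\rm op}_{\mathcal T}(X')+\ind^{\rm op}_{\mathcal T}(W)$, and on $T_0'\to W\to X\dashrightarrow$ (first term $T_0'\in\mathcal T\subseteq\mathcal T^{\perp_1}$) the relation $\ind_{\mathcal T}(W)=[T_0']+\ind_{\mathcal T}(X)$. Eliminating $[T_0']$,
\[
[P]=\ind_{\mathcal T}(X)+\ind^{\rm op}_{\mathcal T}(X')+\bigl(\ind^{\rm op}_{\mathcal T}(W)-\ind_{\mathcal T}(W)\bigr),
\]
so $(\star)$ reduces to the vanishing of the ``defect'' $\mathrm d(M):=\ind_{\mathcal T}(M)-\ind^{\rm op}_{\mathcal T}(M)$ at $W$. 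Since $\mathrm d$ is additive (by Proposition~\ref{useful}(1) and its opposite) and vanishes on $\mathcal T$, it suffices to show $\mathrm d(M)=0$ for all $M\in{}^{\perp_1}\mathcal T$.

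For the latter I would use a sign-flip mechanism: whenever $A\to T\to B\dashrightarrow$ is an $\mathbb E$-triangle with $T\in\mathcal T$, $A\in\mathcal T^{\perp_1}$ and $B\in{}^{\perp_1}\mathcal T$, Proposition~\ref{useful}(2)(iii) applies in both variances (first term in $\mathcal T^{\perp_1}$, last term in ${}^{\perp_1}\mathcal T$), whence $\mathrm d(A)+\mathrm d(B)=\mathrm d(T)=0$. Given $M\in{}^{\perp_1}\mathcal T$, its minimal right $\mathcal T$-approximation $T_0\to M$ has fibre $N\in\mathcal T^{\perp_1}$, and the triangle $N\to T_0\to M\dashrightarrow$ has the required form, so $\mathrm d(M)=-\mathrm d(N)$; dually, a minimal left $\mathcal T$-approximation of $N$ produces an $\mathbb E$-triangle $N\to T^0\to M'\dashrightarrow$ with $M'\in{}^{\perp_1}\mathcal T$ and $\mathrm d(N)=-\mathrm d(M')$. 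Alternating these steps, and keeping track through the long exact sequences of how the perpendicularity degrees of the successive (co)syzygies evolve, one reduces to the case $M\in\mathcal T$, where $\mathrm d(M)=0$; the cleanest organisation of this is an induction on the cluster-tilting degree $n$, the base case $n=2$ being trivial since there ${}^{\perp_1}\mathcal T=\mathcal T$.

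The formal parts — rewriting $\Theta$, the pushout square (available verbatim from the proof of Lemma~\ref{L-31}), the long exact sequence computations and the additivity of $\mathrm d$ — are routine given Proposition~\ref{useful}, (ET4) and (WIC). The step I expect to cause the most trouble is closing the descent in the third paragraph: the sign-flip only works when the relevant (co)syzygy lies in precisely the one of $\mathcal T^{\perp_1}$, ${}^{\perp_1}\mathcal T$ that makes a given approximation automatic, so one must control these perpendicularity conditions along the whole descent and set up the induction on $n$ (equivalently, build a single tower of triangles simultaneously resolving $X$ and $X'$) with that in mind. The case $n=2$, which is the only one needed for the cluster characters of Section~4, is immediate.
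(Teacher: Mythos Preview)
Your overall strategy matches the paper's: both reduce to showing the defect $\mathrm d(M)=\ind_{\mathcal T}(M)-\ind^{\rm op}_{\mathcal T}(M)$ vanishes on the first $\mathcal T$-cosyzygy $K_1$ of $X'$ (your $W$ is $P\oplus K_1$ in the paper's notation), and then run a sign-flip descent. The gap is exactly where you locate it. Your alternation of right and left approximations produces a sequence $M,N,M',N',\dots$ with $\mathrm d(M)=\mathrm d(M')=\cdots$, but nothing forces it into $\mathcal T$: taking a right approximation of $M\in{}^{\perp_1}\mathcal T$ only yields $N\in\mathcal T^{\perp_1}$ with no gain in perpendicularity degree, so the ``induction on $n$'' you allude to has no visible inductive step.

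The paper closes this by walking along the \emph{fixed} opposite-index tower of $X'$, namely the triangles $X'\to T_0\to K_1$, $K_i\to T_i\to K_{i+1}$, which terminates at $K_{n-1}=T_{n-1}\in\mathcal T$ after $n-1$ steps by construction. These tower triangles do not satisfy the hypotheses of your clean two-sided sign-flip (the left-hand term lies in ${}^{\perp_{[1,i]}}\mathcal T$, not in $\mathcal T^{\perp_1}$), so the paper obtains $\mathrm d(K_i)=-\mathrm d(K_{i+1})$ by a different device: it pulls $K_i\to T_i\to K_{i+1}$ back along an auxiliary \emph{right} $\mathcal T$-approximation $T_0'\to K_{i+1}$ (fibre $K'\in\mathcal T^{\perp_1}$), producing an object $H$ sitting in $\mathbb E$-triangles $K'\to H\to T_i$ and $K_i\to H\to T_0'$, and reads off the needed index identities from the decompositions $H\cong K'\oplus T_i\cong K_i\oplus T_0'$. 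That built-in bound on the length of the descent is the ingredient your sketch is missing. For $n=2$ your argument is already complete, as you note.
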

\begin{proof}   By \cite[Lemma 3.4]{Wl} and Proposition \ref{useful}, there exists a tower
 \begin{equation*}
\xymatrix{
                & T_{0}\ar[dr]^{} \ar[rr]^{}      && T_{1}\ar[dr]^{}      &\cdots&\cdots&\cdots&T_{n-2}\ar[dr]^{} \\
 X'\ar[ur]^{}  & & K_{1}\ar@{-->}[ll]^{}\ar[ur]^{}     &&K_{2}\ar@{-->}[ll]^{}     &\cdots&K_{n-2}\ar[ur]^{}   &  &\ar@{-->}[ll]^{} T_{n-1}}
\end{equation*}
such that each $T_{i}\in \mathcal{T}$, $K_{i}\in\mathcal{T}^{\perp_{[1,i]}}$, $[T_{0}]=\ind^{\rm op}_{\mathcal{T}}(X')+\ind^{\rm op}_{\mathcal{T}}(K_{1})$ and $[T_{i}]=\ind^{\rm op}_{\mathcal{T}}(K_{i})+\ind^{\rm op}_{\mathcal{T}}(K_{i+1})$. Consider the following commutative diagram of conflations
\begin{equation*}
\xymatrix{
   X'\ar[d]_{} \ar[r]^{} &  P\ar[d]_{} \ar[r]^{} &  X\ar@{=}[d]^{} \\
   T_{0}\ar[d]_{} \ar[r]^{} & P\oplus K_{1} \ar[r]^{}\ar[d]_{} & X \\
  K_{1} \ar@{=}[r]^{} & K_{1}. &    }
\end{equation*}
By Proposition \ref{useful}, we get
$$\ind_{\mathcal{T}}(X)=[P]+\ind_{\mathcal{T}}(K_{1})-[T_{0}].$$
Thus, we obtain
\begin{align*}
\Theta(X)&=\ind_{\mathcal{T}}(X')-[P]+\ind_{\mathcal{T}}(X)\\
&=\ind_{\mathcal{T}}(X')-[T_{0}]+\ind_{\mathcal{T}}(K_{1})\\
&=\ind_{\mathcal{T}}(X')-\ind^{\rm op}_{\mathcal{T}}(X')+\ind_{\mathcal{T}}(K_{1})-\ind^{\rm op}_{\mathcal{T}}(K_{1}).
\end{align*}
Similarly, for $K_{2}$, there is an $\mathbb{E}$-triangle
$$K'_{2}\stackrel{}{\longrightarrow}T'_{0}\stackrel{}{\longrightarrow}K_{2}\stackrel{}\dashrightarrow$$
with $T'_{0}\in\mathcal{T}$, $K'_{2}\in \mathcal{T}^{\perp_{1}}$ and $[T'_{0}]=\ind_{\mathcal{T}}(K_{2})+\ind_{\mathcal{T}}(K'_{2})$.
Then we have the following commutative diagram
\begin{equation*}
\xymatrix{   &   & K'_{2}\ar[d]\ar@{=}[r] & K'_{2}\ar[d]^{} &\\
  & K_{1} \ar[r]\ar@{=}[d]  &  H \ar[d]_{} \ar[r]&  T'_{0} \ar[d] \\
   & K_{1}\ar[r]  &  T_{1} \ar[r]&  K_{2}\,.  & }
\end{equation*}
Since $K_{1}\in \mathcal{T}^{\perp_{1}}$ and $K'_{2}\in \mathcal{T}^{\perp_{1}}$, we get
$$H\cong K'_{2}\oplus T_{1}\cong K_{1}\oplus T'_{0}. $$
Hence, we have
\begin{align*}
\ind_{\mathcal{T}}(K_{1})-\ind^{\rm op}_{\mathcal{T}}(K_{1})&=\ind_{\mathcal{T}}(K_{1})-([T_{1}]-\ind^{\rm op}_{\mathcal{T}}(K_{2}))\\
&=(\ind_{\mathcal{T}}(K'_{2})+[T_{1}]-[T'_{0}])-([T_{1}]-\ind^{\rm op}_{\mathcal{T}}(K_{2}))\\
&=([T'_{0}]-\ind_{\mathcal{T}}(K_{2}))+[T_{1}]-[T'_{0}]-[T_{1}]+\ind^{\rm op}_{\mathcal{T}}(K_{2})\\
&=-(\ind_{\mathcal{T}}(K_{2})-\ind^{\rm op}_{\mathcal{T}}(K_{2})).
\end{align*}
Thus,
$$\Theta(X)=\ind_{\mathcal{T}}(X')-\ind^{\rm op}_{\mathcal{T}}(X')-(\ind_{\mathcal{T}}(K_{2})-\ind^{\rm op}_{\mathcal{T}}(K_{2})).$$
By repeating this process, we obtain
\begin{flalign*}\Theta(X)&=\ind_{\mathcal{T}}(X')-\ind^{\rm op}_{\mathcal{T}}(X')+(-1)^{n}(\ind_{\mathcal{T}}(T_{n-1})-\ind^{\rm op}_{\mathcal{T}}(T_{n-1}))\\&=\ind_{\mathcal{T}}(X')-\ind^{\rm op}_{\mathcal{T}}(X').\end{flalign*}
Therefore, we complete the proof.
\end{proof}
Let $T$ be an $n$-cluster tilting object in $\mathscr{C}$ and take $\mathcal{T}=\add T$. Set $B=\End_{\mathscr{C}}(T)^{\rm opp}$ and $C=\End_{\mathcal{A}}(T)^{\rm opp}$. In what follows, we will consider the following functors
\begin{flalign*}&\mathbb{F}: \mathscr{C}\rightarrow\mod B;~M\mapsto \Hom_{\mathscr{C}}(T,M),\\
&\mathbb{G}: \mathcal{A}\rightarrow\mod C;~M\mapsto \Hom_{\mathcal{A}}(T,M),\end{flalign*}
and $\mathbb{H}=\mathbb{G}\pi$.

\begin{lemma}\label{map} There is a map defined by
$$\Phi:K_{0}^{\rm sp}(\mod C)\rightarrow K_{0}^{\rm sp}(\mathcal{T});~[\mathbb{H}(M)]\mapsto \Theta(M),$$
for any $M\in \mathscr{C}$.
\end{lemma}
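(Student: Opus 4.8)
The plan is to verify that $\Phi$ is well defined, since $K_{0}^{\rm sp}(\mod C)$ is freely generated by the isomorphism classes $[S]$ of simple $C$-modules, but the recipe $[\mathbb{H}(M)]\mapsto\Theta(M)$ is phrased in terms of arbitrary objects $M\in\mathscr{C}$. So there are really two things to check: first, that the assignment factors through $K_{0}^{\rm sp}(\mod C)$, i.e. $\Theta$ respects the relation $[\mathbb{H}(M)\oplus\mathbb{H}(N)]=[\mathbb{H}(M)]+[\mathbb{H}(N)]$; and second, and more seriously, that $\Theta(M)$ depends only on the isomorphism class of the $C$-module $\mathbb{H}(M)$, not on the chosen preimage $M$. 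The additivity is immediate from Proposition \ref{useful}(1) together with the fact that $\Theta$ was built additively from $\ind_{\mathcal{T}}$ and $[-]$: if $X'\to P\to X\dashrightarrow$ and $Y'\to Q\to Y\dashrightarrow$ are the defining $\mathbb{E}$-triangles, then their direct sum is such a triangle for $X\oplus Y$, whence $\Theta(X\oplus Y)=\Theta(X)+\Theta(Y)$; and $\mathbb{H}(X\oplus Y)\cong\mathbb{H}(X)\oplus\mathbb{H}(Y)$ since $\mathbb{H}$ is additive.

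The heart of the argument is thus: if $M,N\in\mathscr{C}$ satisfy $\mathbb{H}(M)\cong\mathbb{H}(N)$ in $\mod C$, then $\Theta(M)=\Theta(N)$. The strategy I would use is to reduce to the case where $M$ and $N$ differ by a projective-injective summand, i.e. to show that $\mathbb{H}(M)\cong\mathbb{H}(N)$ forces $M\oplus P\cong N\oplus Q$ for suitable $P,Q\in\mathcal{P}$, and then invoke the already-established fact that $\Theta$ kills projective-injectives and is additive. For the reduction, recall that $\mathbb{H}=\mathbb{G}\pi$ where $\pi$ is the projection to the stable category $\mathcal{A}$ and $\mathbb{G}=\Hom_{\mathcal{A}}(T,-)$. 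Since $\mathcal{T}=\add T$ is an $n$-cluster tilting subcategory of the triangulated category $\mathcal{A}$ (Lemma \ref{cluster}(3)), the functor $\mathbb{G}$ restricted to $\mathcal{A}$ is known to be faithful and to detect isomorphisms up to the usual ambiguity; more precisely, for $n$-cluster tilting $T$ one has that $\mathbb{G}$ induces an equivalence onto $\mod C$ when restricted appropriately, or at the very least that $\mathbb{G}(X)\cong\mathbb{G}(Y)$ implies $X\cong Y$ in $\mathcal{A}$ (this uses that every object of $\mathcal{A}$ sits in a triangle $T_1\to T_0\to X\to\Sigma T_1$ with $T_i\in\mathcal{T}$ and that $\Hom_{\mathcal{A}}(T,-)$ is fully faithful on $\mathcal{T}$). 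Hence $\mathbb{H}(M)\cong\mathbb{H}(N)$ gives $\pi(M)\cong\pi(N)$ in $\mathcal{A}$, and by the standard correspondence between isomorphism in the stable category and stable equivalence in $\mathscr{C}$ (using that $\mathscr{C}$ is Frobenius, so $\pi(M)\cong\pi(N)$ iff $M\oplus P\cong N\oplus Q$ for some $P,Q\in\mathcal{P}$), we conclude $M\oplus P\cong N\oplus Q$.

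Granting this, I would finish as follows. Pick defining $\mathbb{E}$-triangles $M'\to P_0\to M\dashrightarrow$ and $N'\to Q_0\to N\dashrightarrow$ with $P_0,Q_0\in\mathcal{P}$; adding the split triangles $P\xrightarrow{\ \mathrm{id}\ }P\to 0\dashrightarrow$ and $Q\xrightarrow{\ \mathrm{id}\ }Q\to 0\dashrightarrow$ appropriately and using additivity of $\Theta$, we have $\Theta(M)=\Theta(M\oplus P)$ and $\Theta(N)=\Theta(N\oplus Q)$ because $\Theta$ vanishes on $\mathcal{P}\subseteq\Cone(\mathcal{T},\mathcal{P})$ by Proposition \ref{2333} (every projective $P$ fits in $P\to 0\to P\dashrightarrow$, wait—rather one takes the triangle exhibiting $P$ itself, but in any case $\Theta(P)=0$ follows since one may take $0\to P\to P\dashrightarrow$ as the defining triangle, giving $\Theta(P)=\ind_{\mathcal{T}}(0)-[P]+\ind_{\mathcal{T}}(P)=-[P]+[P]=0$). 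Since $M\oplus P\cong N\oplus Q$, we get $\Theta(M)=\Theta(M\oplus P)=\Theta(N\oplus Q)=\Theta(N)$, as required, and the map $\Phi$ is well defined. The main obstacle I anticipate is pinning down precisely the statement that $\mathbb{G}$ detects isomorphisms in $\mathcal{A}$ up to projective summands—this is where the $n$-cluster tilting hypothesis and the Frobenius structure are genuinely used, and it may require citing or reproving a lemma (of Koenig--Zhu / Iyama type) that $\Hom_{\mathcal{A}}(T,-)\colon\mathcal{A}\to\mod C$ is full and reflects isomorphisms; everything else is bookkeeping with Proposition \ref{useful} and Proposition \ref{2333}.
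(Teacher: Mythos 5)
There is a genuine gap at the heart of your argument: the claim that $\mathbb{G}(X)\cong\mathbb{G}(Y)$ implies $X\cong Y$ in $\mathcal{A}$ is false, and with it your reduction to ``$M\oplus P\cong N\oplus Q$ with $P,Q\in\mathcal{P}$''. The functor $\mathbb{G}=\Hom_{\mathcal{A}}(T,-)$ annihilates $\Sigma\mathcal{T}$, since $\Hom_{\mathcal{A}}(T,\Sigma T')=0$ for $T'\in\mathcal{T}$ by rigidity (Lemma \ref{cluster}(1)); thus $\mathbb{G}(X\oplus\Sigma T')\cong\mathbb{G}(X)$ while $X\oplus\Sigma T'\ncong X$. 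The presentation triangle $T_1\to T_0\to X\to\Sigma T_1$ you invoke only shows that $\mathbb{G}(X)$ determines $X$ up to summands in $\Sigma\mathcal{T}$ --- this is exactly why the correct statement (used in the paper, following J{\o}rgensen) is an equivalence $(\mathcal{T}\ast\Sigma\mathcal{T})/\Sigma\mathcal{T}\simeq\mod C$, with the additive quotient by $\Sigma\mathcal{T}$ being essential. Consequently $\mathbb{H}(M)\cong\mathbb{H}(N)$ does \emph{not} give $\pi(M)\cong\pi(N)$; it only gives
$$M\oplus P_M\oplus T'_M\;\cong\;N\oplus P_N\oplus T'_N$$
with $P_M,P_N\in\mathcal{P}$ and $T'_M,T'_N\in\Cone(\mathcal{T},\mathcal{P})$, the latter being the lifts of $\Sigma\mathcal{T}$ along $\pi$ (Lemma \ref{L-31}).

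Your endgame can be repaired, but it needs more than $\Theta(\mathcal{P})=0$: one must also know that $\Theta$ vanishes on all of $\Cone(\mathcal{T},\mathcal{P})$, which is precisely the ``in particular'' clause of Proposition \ref{2333} (via $\Theta(X)=\ind_{\mathcal{T}}(X')-\ind^{\rm op}_{\mathcal{T}}(X')$ and the fact that for $X\in\Cone(\mathcal{T},\mathcal{P})$ one may take $X'\in\mathcal{T}$, where index and opposite index agree). You cite that proposition only for projectives, so this step is missing from your write-up. Two smaller points: $K_0^{\rm sp}(\mod C)$ is freely generated by the isomorphism classes of \emph{indecomposable} modules, not of simple modules (it is $K_0(\mod C)$ that is free on the simples); and for $\Phi$ to be defined on all of $K_0^{\rm sp}(\mod C)$ you must also check that every $C$-module is of the form $\mathbb{H}(M)$, which again comes from the equivalence above together with $\pi^{-1}(\mathcal{T}\ast\Sigma\mathcal{T})=\mathcal{T}\ast\Cone(\mathcal{T},\mathcal{P})$ --- a point your proposal does not address.
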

\begin{proof}By Lemma \ref{cluster}, $\mathcal{T}$ is an $n$-cluster tilting subcategory of $\mathcal{A}$. As shown in \cite{JO2}, the restriction of $\mathbb{G}$ induces an equivalence of categories
$$(\mathcal{T}\ast \Sigma\mathcal{T})/\Sigma \mathcal{T} \cong\mod C.$$
By Lemma \ref{L-31}, one can check that $\pi^{-1}(\mathcal{T}\ast \Sigma\mathcal{T})=\mathcal{T}\ast \Cone(\mathcal{T},\mathcal{P})$. Hence, for any $m\in\mod C$, there exists $M\in \mathscr{C}$ such that $\mathbb{H}(M)\cong m$. Given $M,N\in\mathscr{C}$, suppose that $\mathbb{H}(M)\cong\mathbb{H}(N)$, we need to prove $\Theta(M)=\Theta(N)$. Indeed, by the definition of $\mathbb{H}$, we have
$$M\oplus P_{M}\oplus T'_{M}\cong N\oplus P_{N}\oplus T'_{N}$$
for some $P_{M},P_{N}\in \mathcal{P}$ and $T'_{M},T'_{N}\in\Cone(\mathcal{T},\mathcal{P})$. By Proposition \ref{2333}, we have $\Theta(P_M)=\Theta(P_N)=0$ and $\Theta(T'_{M})=\Theta(T'_{N})=0$, then we finish the proof.
\end{proof}

\begin{lemma}\label{L-5} Let
$$A\stackrel{f}{\longrightarrow}B\stackrel{g}{\longrightarrow}C\stackrel{}\dashrightarrow$$
be an $\mathbb{E}$-triangle in $\mathscr{C}$.

$(1)$ If $\mathbb{F}(g)$ is an epimorphism, then
$$\ind_{\mathcal{T}}(A)-\ind_{\mathcal{T}}(B)+\ind_{\mathcal{T}}(C)=0.$$

$(2)$ If $\mathbb{H}(g)$ is an epimorphism, then $\mathbb{F}(g)$ is an epimorphism.

$(2')$ If $\mathbb{H}(f)$ is a monomorphism, then $\mathbb{F}(f)$ is a monomorphism.
\end{lemma}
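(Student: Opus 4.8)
\textbf{Proof proposal for Lemma \ref{L-5}.}

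The plan is to treat part (1) first, then derive (2) and (2') as applications of essentially the same diagrammatic reduction. For part (1): since $\mathbb{F}(g)=\Hom_{\mathscr{C}}(T,g)$ is an epimorphism, the given $\mathbb{E}$-triangle $A\to B\to C\dashrightarrow$ is a conflation whose deflation $g$ is a right $\add T = \mathcal{T}$-approximation of $C$ \emph{after} observing that any morphism $T\to C$ lifts along $g$. However, this alone does not immediately fit Proposition \ref{useful}(2)(i), because that statement additively splits $[B]$, not $\ind_{\mathcal{T}}(B)$. The cleaner route is to reduce to the Frobenius setting: using Remark \ref{dimension} and Lemma \ref{L-31}, pass to the triangulated stable category $\mathcal{A}$. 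First I would build, via enough projectives, $\mathbb{E}$-triangles $A'\to P\to A\dashrightarrow$ etc.\ with $P\in\mathcal{P}$, and use the octahedral-type axioms (ET4) to relate $\ind_{\mathcal{T}}$ of $A$, $B$, $C$ through their syzygies. The key point is that $\mathbb{F}(g)$ epi forces the connecting map $C\to\Sigma A$ in $\mathcal{A}$ to vanish on $\Hom_{\mathcal{A}}(T,-)$, i.e.\ $\mathbb{H}$ of it is zero; combined with Lemma \ref{cluster}(2) one can then argue the triangle $A\to B\to C\to\Sigma A$ in $\mathcal{A}$ is, up to summands in $\mathcal{T}$, induced from the $n$-cluster tilting structure in a way that makes all three $f_i$-towers compatible, yielding the alternating-sum identity.

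Concretely, I would argue as follows. Take the tower of $M=C$ with respect to $\mathcal{T}$ giving $\ind_{\mathcal{T}}(C)=\sum(-1)^i[T_i]$, where the first step is a minimal right $\mathcal{T}$-approximation $T_0\to C$. Because $\mathbb{F}(g)$ is epi, the deflation $g:B\to C$ also covers every $T\to C$, so there is a right $\mathcal{T}$-approximation factoring through $g$; splicing this with the conflation $A\to B\to C\dashrightarrow$ via (ET4) produces a conflation $A\to B'\to T_0\dashrightarrow$ with $B'$ a pullback, together with a conflation $K_1\to B'\to \text{(?)}$ relating $B'$ to $B$ and the first syzygy $K_1$ of $C$. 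The identity $\ind_{\mathcal{T}}(A)-\ind_{\mathcal{T}}(B)+\ind_{\mathcal{T}}(C)=0$ then follows by additivity (Proposition \ref{useful}(1)) and repeated application of Proposition \ref{useful}(2)(iii) along the induced conflations $K_i\to T_i\to K_{i-1}\dashrightarrow$, each of which has left term in $\mathcal{T}^{\perp_1}$ by construction ($K_i\in\mathcal{T}^{\perp_{[1,i]}}$). Telescoping the alternating sum over $i=0,\dots,n-1$ collapses the $K_i$ contributions and leaves exactly $\ind_{\mathcal{T}}(C)$ on one side and $\ind_{\mathcal{T}}(B)-\ind_{\mathcal{T}}(A)$ on the other.

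For (2): suppose $\mathbb{H}(g)=\Hom_{\mathcal{A}}(T,\pi g)$ is epi. Given any $\varphi\in\Hom_{\mathscr{C}}(T,C)$, its image $\pi\varphi\in\Hom_{\mathcal{A}}(T,\pi C)$ lifts to some $\psi\in\Hom_{\mathcal{A}}(T,\pi B)$ with $\pi g\circ\psi=\pi\varphi$, so $g\circ\tilde\psi-\varphi$ factors through a projective–injective object $P$; since $P$ is projective and $g$ is a deflation, $g$ covers $P\to C$, hence covers $g\tilde\psi-\varphi$, hence covers $\varphi$. Thus $\mathbb{F}(g)$ is epi. Part (2') is the formal dual: $\mathbb{H}(f)$ mono means $\Hom_{\mathcal{A}}(T,\pi f)$ is injective; if $\alpha\in\Hom_{\mathscr{C}}(T,A)$ has $f\alpha=0$, then $\pi f\circ\pi\alpha=0$ gives $\pi\alpha=0$, so $\alpha$ factors through an injective object $I$; as $f$ is an inflation and $I$ is injective, $A\to I$ extends along $f$ — wait, more precisely one uses that $\alpha$ factors as $T\to I\to A$ and the extension property of $I$ along the inflation $f$ to conclude $\alpha=0$ directly after noting $f\alpha=0$ already forces the factorization through $f$ to be zero. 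The main obstacle is part (1): making the octahedral bookkeeping between the conflation $A\to B\to C\dashrightarrow$ and the $n$-step approximation tower of $C$ precise enough that every intermediate conflation genuinely has the Ext-vanishing hypothesis needed for Proposition \ref{useful}(2)(iii). Everything else is routine lifting-along-(in/de)flations using projectivity/injectivity of $\mathcal{P}$.
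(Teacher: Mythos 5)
Your parts (1) and (2) contain the right ingredients, but part (1) as written does not close, and part (2') is not a proof. For (1), your concrete plan (second paragraph) is a legitimate variant of the paper's argument: you pull back the conflation along a right $\mathcal{T}$-approximation $\alpha\colon T_{0}\to C$ of $C$, obtaining conflations $A\to B'\to T_{0}$ and $K_{1}\to B'\to B$ (the ``$(?)$'' in your diagram is $B$), whereas the paper starts from a right $\mathcal{T}$-approximation $\beta\colon T_{0}\to B$ of $B$, notes that $g\beta$ is then a right $\mathcal{T}$-approximation of $C$ precisely because $\mathbb{F}(g)$ is epi, and gets conflations $K_{1}\to X\to A$ and $X\to T_{0}\to C$; either diagram plus three applications of Proposition \ref{useful} finishes the proof. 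The gap in your version is that you never say how $\ind_{\mathcal{T}}(B')$ is computed from the conflation $A\to B'\to T_{0}$: the index is not additive on arbitrary conflations, and neither clause of Proposition \ref{useful}(2) applies to it as it stands. The missing observation is that this conflation \emph{splits}: since $\mathbb{F}(g)$ is epi and $T_{0}\in\add T$, the approximation $\alpha$ lifts through $g$, and the lift together with $\mathrm{id}_{T_{0}}$ gives a section of $B'\to T_{0}$ (equivalently, $\alpha^{*}\delta=0$ where $\delta$ realizes the given $\mathbb{E}$-triangle). Hence $B'\cong A\oplus T_{0}$ and $\ind_{\mathcal{T}}(B')=\ind_{\mathcal{T}}(A)+[T_{0}]$ by additivity; combined with $\ind_{\mathcal{T}}(B')=\ind_{\mathcal{T}}(K_{1})+\ind_{\mathcal{T}}(B)$ and $[T_{0}]=\ind_{\mathcal{T}}(K_{1})+\ind_{\mathcal{T}}(C)$ this gives the identity in one step. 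Your proposed ``telescoping over the conflations $K_{i}\to T_{i}\to K_{i-1}$ for $i=0,\dots,n-1$'' is neither needed nor a substitute for this: the higher terms of the tower of $C$ never interact with $A$ or $B$, and the first paragraph's detour through syzygies and the stable category is likewise unnecessary.

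Part (2) is correct and is exactly the paper's argument (lift modulo projectives, then lift the projective correction term through the deflation $g$). Part (2'), however, breaks down where you yourself wrote ``wait'': the factorization you obtain from $\pi\alpha=0$ is $T\to P\to A$ with $P$ injective, which points the wrong way for the extension property of $P$ along the inflation $f$ to be applied (that property concerns maps \emph{into} $P$, i.e.\ it makes $\Hom(f,P)$ surjective, not $\Hom(T,f)$ injective), and the closing clause ``$f\alpha=0$ already forces the factorization through $f$ to be zero'' is not an argument --- inflations need not be monomorphisms in $\mathscr{C}$. The paper only asserts that (2') is ``analogous'', but your attempt to spell out the analogy shows that the naive dualization does not work because $\mathbb{F}=\Hom_{\mathscr{C}}(T,-)$ is covariant; this step needs an actual argument rather than an appeal to duality.
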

\begin{proof}  (1) Take an  $\mathbb{E}$-triangle
$$K_{1}\stackrel{}{\longrightarrow}T_{0}\stackrel{\beta}{\longrightarrow}B\stackrel{}\dashrightarrow$$
such that $\beta$ is a right $\mathcal{T}$-approximation and $K_{1}\in \mathcal{T}^{\perp_{1}}$. Thus we get a commutative diagram
\begin{equation*}
\xymatrix{
  K_{1} \ar@{=}[d] \ar[r]^-{} & X \ar[d]^-{} \ar[r] & A \ar[d] \\
  K_{1}\ar[r]^-{} & T_{0} \ar_-{\alpha}[d] \ar[r]^-{\beta} &B\ar[d]^-{g} \\
   & C\ar@{=}[r] & C.  }
\end{equation*}
By hypothesis, $\mathbb{F}(g)$ is an epimorphism and thus $\alpha$ is a right $\mathcal{T}$-approximation. By Proposition \ref{useful}, we obtain
$$\ind_{\mathcal{T}}(X)=\ind_{\mathcal{T}}(K_{1})+\ind_{\mathcal{T}}(A),[T_{0}]=\ind_{\mathcal{T}}(X)+\ind_{\mathcal{T}}(C),[T_{0}]=\ind_{\mathcal{T}}(K_{1})+\ind_{\mathcal{T}}(B).$$
Hence, we have
\begin{align*}
&\ind_{\mathcal{T}}(A)-\ind_{\mathcal{T}}(B)+\ind_{\mathcal{T}}(C)\\&=(\ind_{\mathcal{T}}(X)-\ind_{\mathcal{T}}(K_{1}))-([T_{0}]-\ind_{\mathcal{T}}(K_{1}))+\ind_{\mathcal{T}}(C)\\
&=\ind_{\mathcal{T}}(X)-[T_{0}]+\ind_{\mathcal{T}}(C)
=0.
\end{align*}

$(2)$ Since $\mathbb{H}(g)$ is an epimorphism, for any $a\in\Hom_{\mathscr{C}}(T,C)$, there exists $h\in\Hom_{\mathscr{C}}(T,B)$ such that $a-gh$ factors through some projective object $P:$
$$\xymatrix{
   T\ar[rr]^{a-gh} \ar[dr]_{t_{1}}
                &  &      C  \\
                &     P   \ar[ur]_{t_{2}}           }
$$
Since $P$ is projective, there exists $s\in\Hom_{\mathscr{C}}(P,B)$ such that $t_{2}=gs$. It follows that $a=g(st_{1}+h)$ and thus $\mathbb{F}(g)$ is an epimorphism. The proof for $(2')$ is analogous.
\end{proof}

The following proposition is inspired by  \cite[Theorem 4.4]{JO2}.
\begin{proposition}\label{L-7} For any $\mathbb{E}$-triangle
$$A\stackrel{f}{\longrightarrow}B\stackrel{g}{\longrightarrow}C\stackrel{}\dashrightarrow$$
in $\mathscr{C}$, we have
$$ \ind_{\mathcal{T}}(A)-\ind_{\mathcal{T}}(B)+\ind_{\mathcal{T}}(C)=\Phi({\rm coker} \mathbb{H}(g)).$$
\end{proposition}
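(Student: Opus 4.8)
The plan is to prove the identity by two successive reductions that bring the $\mathbb{E}$-triangle into the normal form in which its middle and right terms both lie in $\mathcal{T}$, and then to evaluate $\Phi(\operatorname{coker}\mathbb{H}(g))$ in that case by a direct computation with Proposition \ref{useful} and the octahedron axioms. Two observations will be used repeatedly. First, $\mathcal{P}\subseteq\mathcal{T}$ (a projective object lies in ${}^{\perp_{[1,n-1]}}\mathcal{T}=\mathcal{T}$), so every right $\mathcal{T}$-approximation can be chosen to be a deflation, and if $\varphi$ is a right $\mathcal{T}$-approximation then $\mathbb{H}(\varphi)$ is an epimorphism, since $\operatorname{Hom}_{\mathscr{C}}(T,\varphi)$ is an epimorphism and $\operatorname{Hom}_{\mathcal{A}}(T,-)$ is a quotient of $\operatorname{Hom}_{\mathscr{C}}(T,-)$. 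Second, both $\Theta$ and the difference $\ind_{\mathcal{T}}-\ind^{\rm op}_{\mathcal{T}}$ are unchanged by adjoining projective summands, hence factor through $\mathcal{A}$; in particular, for any $\mathbb{E}$-triangle $M\to Q\to M''\dashrightarrow$ with $Q\in\mathcal{P}$ one has $\Theta(M'')=\ind_{\mathcal{T}}(M)-[Q]+\ind_{\mathcal{T}}(M'')$ and $\mathbb{H}(M'')=\operatorname{Hom}_{\mathcal{A}}(T,\Sigma\pi M)$.

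First I would reduce to the case $C\in\mathcal{T}$. Pick a right $\mathcal{T}$-approximation $r\colon T_C\to C$ that is a deflation and form the homotopy pullback of $g$ and $r$: this produces an $\mathbb{E}$-triangle $A\to V\xrightarrow{b}T_C\dashrightarrow$ together with $V\to B\oplus T_C\xrightarrow{(g,r)}C\dashrightarrow$. Here $(g,r)$ is a deflation by (WIC), since its restriction to $T_C$ is $r$, and $\mathbb{H}((g,r))$ is an epimorphism because $\mathbb{H}(r)$ is; so Lemma \ref{L-5}(2) and (1) give $\ind_{\mathcal{T}}(A)-\ind_{\mathcal{T}}(B)+\ind_{\mathcal{T}}(C)=\ind_{\mathcal{T}}(A)-\ind_{\mathcal{T}}(V)+[T_C]$. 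The connecting morphism of $A\to V\to T_C\dashrightarrow$ is $h\circ\pi(r)$, where $h$ is the connecting morphism of the original triangle, and $\mathbb{H}(r)$ is epic, so applying $\operatorname{Hom}_{\mathcal{A}}(T,-)$ gives $\operatorname{coker}\mathbb{H}(b)\cong\operatorname{coker}\mathbb{H}(g)$. Hence it suffices to treat $A\to V\xrightarrow{b}T_C\dashrightarrow$, i.e. we may assume $C\in\mathcal{T}$. Applying the analogous idea to the middle term, choose a right $\mathcal{T}$-approximation $q\colon T_B\to B$ that is a deflation, with $\mathbb{E}$-triangle $K_B\to T_B\xrightarrow{q}B\dashrightarrow$ (so $K_B\in\mathcal{T}^{\perp_1}$); then $gq$ is a deflation, and the octahedron (ET4)$^{\mathrm{op}}$ applied to $T_B\xrightarrow{q}B\xrightarrow{g}C$ yields $\mathbb{E}$-triangles $W\to T_B\xrightarrow{gq}C\dashrightarrow$ and $K_B\to W\to A\dashrightarrow$. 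Proposition \ref{useful} applied to these three $\mathbb{E}$-triangles gives $\ind_{\mathcal{T}}(A)-\ind_{\mathcal{T}}(B)=\ind_{\mathcal{T}}(W)-[T_B]$, and $\operatorname{coker}\mathbb{H}(gq)\cong\operatorname{coker}\mathbb{H}(g)$ since $\mathbb{H}(q)$ is epic; so, performing these two reductions in turn, we may assume both the middle and the right terms of the $\mathbb{E}$-triangle lie in $\mathcal{T}$.

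Now suppose the $\mathbb{E}$-triangle is $W\to\hat T_1\xrightarrow{w}\hat T_0\dashrightarrow$ with $\hat T_1,\hat T_0\in\mathcal{T}$. In $\mathcal{A}$ this is the triangle $W\to\hat T_1\xrightarrow{w}\hat T_0\to\Sigma W$ (Lemma \ref{L-31}); applying $\operatorname{Hom}_{\mathcal{A}}(T,-)$ and using $\operatorname{Hom}_{\mathcal{A}}(T,\Sigma\hat T_1)=0$ (Lemma \ref{cluster}) identifies $\operatorname{coker}\mathbb{H}(w)$ with $\operatorname{Hom}_{\mathcal{A}}(T,\Sigma W)$, so $\Phi(\operatorname{coker}\mathbb{H}(w))=\Theta(\Sigma W)$. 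To compute this, fix an $\mathbb{E}$-triangle $\hat T_1\to I(\hat T_1)\to\Sigma\hat T_1\dashrightarrow$ with $I(\hat T_1)\in\mathcal{P}$ and apply the octahedron (ET4) to the composable inflations $W\to\hat T_1\to I(\hat T_1)$: this gives an object $E$ together with $\mathbb{E}$-triangles $W\to I(\hat T_1)\to E\dashrightarrow$ and $\hat T_0\to E\to\Sigma\hat T_1\dashrightarrow$. Since $I(\hat T_1)\in\mathcal{P}$, the object $E$ is a lift of $\Sigma W$, whence $\mathbb{H}(E)\cong\operatorname{coker}\mathbb{H}(w)$ and $\Phi(\operatorname{coker}\mathbb{H}(w))=\Theta(E)$; by the definition of $\Theta$ applied to the first $\mathbb{E}$-triangle, $\Theta(E)=\ind_{\mathcal{T}}(W)-[I(\hat T_1)]+\ind_{\mathcal{T}}(E)$. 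Finally, applying Proposition \ref{useful}(2)(iii) first to $\hat T_1\to I(\hat T_1)\to\Sigma\hat T_1\dashrightarrow$ and then to $\hat T_0\to E\to\Sigma\hat T_1\dashrightarrow$ (the left-hand terms $\hat T_1,\hat T_0$ lying in $\mathcal{T}\subseteq\mathcal{T}^{\perp_1}$) yields $\ind_{\mathcal{T}}(\Sigma\hat T_1)=[I(\hat T_1)]-[\hat T_1]$ and then $\ind_{\mathcal{T}}(E)=[\hat T_0]+[I(\hat T_1)]-[\hat T_1]$; substituting gives $\Theta(E)=\ind_{\mathcal{T}}(W)-[\hat T_1]+[\hat T_0]=\ind_{\mathcal{T}}(W)-\ind_{\mathcal{T}}(\hat T_1)+\ind_{\mathcal{T}}(\hat T_0)$, which is the assertion in this case, hence in general.

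The routine verifications concern the behaviour of $\mathbb{H}=\operatorname{Hom}_{\mathcal{A}}(T,-)\circ\pi$: that it sends right $\mathcal{T}$-approximations to epimorphisms, and that it preserves the relevant cokernels in each reduction step, both resting on $\operatorname{Hom}_{\mathcal{A}}(T,-)$ being homological on $\mathcal{A}$. The main difficulty, and where I expect the actual work to lie, is the last paragraph: one must see that once both the middle and right terms are in $\mathcal{T}$ the cokernel $\operatorname{coker}\mathbb{H}(w)$ is nothing but $\mathbb{H}$ of a lift of $\Sigma W$, and then choose the correct octahedron — using the injective hull of $\hat T_1$ as the projective middle term of an $\mathbb{E}$-triangle realizing that lift — so that the two applications of Proposition \ref{useful}(2)(iii) telescope to the desired formula. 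Getting this diagram set up correctly, rather than any single computation, is the crux.
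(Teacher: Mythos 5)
Your proposal is correct, and it takes a genuinely different route from the paper's. The paper works in the stable category $\mathcal{A}$: it uses $C\in\mathcal{T}\ast\Sigma\mathcal{T}\ast\cdots\ast\Sigma^{n-1}\mathcal{T}$, and for $n=2$ factors the connecting morphism $C\to\Sigma A$ through an object $M\in\mathcal{T}\ast\Sigma\mathcal{T}$ with $\mathbb{G}$-epic and $\mathbb{G}$-monic components (J{\o}rgensen's decomposition lemma), identifies ${\rm coker}\,\mathbb{H}(g)\cong\mathbb{H}(M)$, and computes $\Theta(M)$ after lifting an octahedron back to $\mathscr{C}$; the case $n>2$ is then reduced to $n=2$ by splitting off the part of $C$ lying in $\Sigma^{2}\mathcal{T}\ast\cdots\ast\Sigma^{n-1}\mathcal{T}$. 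You instead normalize the conflation itself: two approximation steps (a homotopy pullback along a right $\mathcal{T}$-approximation of $C$, then an octahedron over a right $\mathcal{T}$-approximation of the middle term) replace the given $\mathbb{E}$-triangle by one of the form $W\to\hat T_1\to\hat T_0\dashrightarrow$ with $\hat T_1,\hat T_0\in\mathcal{T}$, preserving both the alternating index sum (via Lemma \ref{L-5} and Proposition \ref{useful}) and the cokernel of $\mathbb{H}$ applied to the deflation; the identity is then a short direct computation. This buys uniformity in $n$ (no induction, no appeal to the lemmas of \cite{JO2} on epi--mono decompositions) and keeps almost all of the work inside $\mathscr{C}$. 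The only external input beyond what the paper already uses is the homotopy-pullback conflation $V\to B\oplus T_C\to C$, which is the dual of \cite[Proposition 1.20]{LN}, already cited in the proof of Lemma \ref{L-31}, so nothing new is required. Two small remarks: in the first reduction it is cleaner to deduce that $(g\ r)$ is a deflation from Condition \ref{WIC} applied to its restriction to $B$ (which is the deflation $g$) rather than to $T_C$; and your final computation can be shortened by invoking Proposition \ref{2333}: since $W\to\hat T_1$ is a left $\mathcal{T}$-approximation (because $\mathbb{E}(\hat T_0,\mathcal{T})=0$), Proposition \ref{useful}(2)(ii) gives $\ind^{\rm op}_{\mathcal{T}}(W)=[\hat T_1]-[\hat T_0]$, whence $\Phi({\rm coker}\,\mathbb{H}(w))=\Theta(E)=\ind_{\mathcal{T}}(W)-\ind^{\rm op}_{\mathcal{T}}(W)$ is exactly the desired quantity.
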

\begin{proof}  By Lemma \ref{L-31}, \begin{equation*}
A\stackrel{\pi(f)}{\longrightarrow}B\stackrel{\pi(g)}{\longrightarrow}C\stackrel{s}{\longrightarrow}\Sigma A
\end{equation*}
is a triangle in $\mathcal{A}$. By Lemma \ref{cluster} and \cite[Theorem 3.1]{IY}, $C\in\mathcal{T}\ast \Sigma\mathcal{T}\ast\cdots \ast\Sigma^{n-1}\mathcal{T}$.

If $\mathcal{T}$ is a $2$-cluster tilting subcategory, i.e. $n=2$, by \cite[Lemma 2.4]{JO2}, there is a decomposition
$$\xymatrix{
   C\ar[rr]^{s} \ar[dr]_{t_{1}}
                &  &      \Sigma A \\
                &     M  \ar[ur]_{t_{2}}           }
$$
such that $M\in \mathcal{T}\ast \Sigma\mathcal{T}$, $\mathbb{G}(t_{1})$ is an epimorphism and  $\mathbb{G}(t_{2})$ is a monomorphism. Thus, $${\rm coker} \mathbb{H}(g)={\rm coker}\mathbb{G}\pi(g)\cong\Im \mathbb{G}(s)\cong\mathbb{G}(M)=\mathbb{H}(M).$$
Applying the octahedral axiom, we have a commutative diagram of triangles in $\mathcal{A}$
$$
\xymatrix{
   &  E\ar[d]_{} \ar@{=}[r]^{} &  E\ar[d]_{} & \\
  A\ar@{=}[d]_{} \ar[r]^{} &  B\ar[d]_{} \ar[r]^{} &  C\ar[d]_-{t_{1}} \ar[r]^-{s} &\Sigma A  \ar@{=}[d]^{} \\
   A\ar[r]^{} &  V\ar[d]_-{t_{5}} \ar[r]^-{t_{3}} &  M\ar[d]_-{t_{4}} \ar[r]^-{t_{2}} & \Sigma A\\
   &  \Sigma E \ar@{=}[r]^{} &  \Sigma E &    }
$$
By the lift of triangles, we have the following $\mathbb{E}$-triangles
\begin{flalign*}&\Sigma^{-1}M\stackrel{}{\longrightarrow}I(\Sigma^{-1}M)\stackrel{}{\longrightarrow}M\stackrel{}\dashrightarrow,\quad
E\stackrel{}{\longrightarrow}I(E)\stackrel{}{\longrightarrow}\Sigma E\stackrel{}\dashrightarrow,\\
&\Sigma^{-1}M\stackrel{}{\longrightarrow}A\oplus I(\Sigma^{-1}M)\stackrel{\alpha}{\longrightarrow}V\stackrel{}\dashrightarrow,\quad
E\stackrel{}{\longrightarrow}B\oplus I(E)\stackrel{\beta}{\longrightarrow}V\stackrel{}\dashrightarrow,\end{flalign*}
and $$E\stackrel{}{\longrightarrow}C\oplus I(E)\stackrel{\gamma}{\longrightarrow}M\stackrel{}\dashrightarrow,$$ where $I(\Sigma^{-1}M)$, $I(E)\in \mathcal{P}$. Since $\mathbb{G}(t_{2})$ is a monomorphism, we get $\mathbb{H}(\alpha)$ is an epimorphism. Thus, by Lemma \ref{L-5}, we have
$$\ind_{\mathcal{T}}(A\oplus I(\Sigma^{-1}M))=\ind_{\mathcal{T}}(\Sigma^{-1}M)+\ind_{\mathcal{T}}(V).$$
Similarly, we can prove that $\mathbb{H}(\beta)$ and $\mathbb{H}(\gamma)$ are epimorphisms. Thus,
\begin{flalign*}&\ind_{\mathcal{T}}(B\oplus I(E))=\ind_{\mathcal{T}}(E)+\ind_{\mathcal{T}}(V),\\
&\ind_{\mathcal{T}}(C\oplus I(E))=\ind_{\mathcal{T}}(E)+\ind_{\mathcal{T}}(M).\end{flalign*}
Hence, we obtain
\begin{align*}
&\ind_{\mathcal{T}}(A)-\ind_{\mathcal{T}}(B)+\ind_{\mathcal{T}}(C)\\&=\ind_{\mathcal{T}}(\Sigma^{-1}M)+\ind_{\mathcal{T}}(V)-[ I(\Sigma^{-1}M)]-\ind_{\mathcal{T}}(E)\\
&\quad-\ind_{\mathcal{T}}(V)+[I(E)]+\ind_{\mathcal{T}}(E)+\ind_{\mathcal{T}}(M)-[I(E)]\\
&=\ind_{\mathcal{T}}(\Sigma^{-1}M)-[I(\Sigma^{-1}M)]+\ind_{\mathcal{T}}(M)\\
&=\Theta(M)=\Phi(\mathbb{H}(M))=\Phi({\rm coker} \mathbb{H}(g)).
\end{align*}
For the case $n>2$, by \cite[Lemma 1.7]{JO2}, there is a triangle
 \begin{equation*}
X\stackrel{c}{\longrightarrow}C\stackrel{}{\longrightarrow}M\stackrel{}{\longrightarrow}\Sigma X
\end{equation*}
with $X\in\mathcal{T}\ast \Sigma\mathcal{T}$ and $M\in\Sigma^{2}\mathcal{T}\ast\cdots \ast\Sigma^{n-1}\mathcal{T}$. Then we get a commutative diagram  of triangles
$$
\xymatrix{
   &  \Sigma^{-1}M\ar[d]_{} \ar@{=}[r]^{} &  \Sigma^{-1}M\ar[d]_{} & \\
  A\ar@{=}[d]_{} \ar[r]^{} &  L\ar[d]_{b} \ar[r]^{a} &  X\ar[d]_-{c} \ar[r]^-{} &\Sigma A  \ar@{=}[d]^{} \\
   A\ar[r]^{} &  B\ar[d]_-{} \ar[r]^-{\pi(g)} &  C\ar[d]_-{} \ar[r]^-{} & \Sigma A\\
   & M\ar@{=}[r]^{} &  M &    }
$$
By the lift of triangles, we get the $\mathbb{E}$-triangles
\begin{flalign*}\Sigma^{-1}M\stackrel{}{\longrightarrow}I(\Sigma^{-1}M)\stackrel{}{\longrightarrow}M\stackrel{}\dashrightarrow,
\quad\Sigma^{-1}M\stackrel{}{\longrightarrow}L\oplus I(\Sigma^{-1}M)\stackrel{d}{\longrightarrow}B\stackrel{}\dashrightarrow,\end{flalign*}
and $$\Sigma^{-1}M\stackrel{}{\longrightarrow}X\oplus I(\Sigma^{-1}M)\stackrel{e}{\longrightarrow}C\stackrel{}\dashrightarrow.$$
Since $X\in\mathcal{T}\ast \Sigma\mathcal{T}$, we have
\begin{equation}\label{777}
\ind_{\mathcal{T}}(A)-\ind_{\mathcal{T}}(L)+\ind_{\mathcal{T}}(X)=\Phi({\rm coker} \mathbb{H}(a)).
\end{equation}
By \cite[Lemma 2.2]{JO2}, $\mathbb{G}(M)=0$ and thus $\mathbb{H}(d)=\mathbb{G}(b)$ is an epimorphism. Similarly, $\mathbb{H}(e)=\mathbb{G}(c)$ is an epimorphism. By Lemma \ref{L-5}, we obtain
\begin{flalign}
&\ind_{\mathcal{T}}(L\oplus I(\Sigma^{-1}M))=\ind_{\mathcal{T}}(\Sigma^{-1}M)+\ind_{\mathcal{T}}(B),\label{888}\\
&\ind_{\mathcal{T}}(X\oplus I(\Sigma^{-1}M))=\ind_{\mathcal{T}}(\Sigma^{-1}M)+\ind_{\mathcal{T}}(C)\label{999}.
\end{flalign}
Combining the equations (\ref{777}-\ref{999}), we get
$$ \ind_{\mathcal{T}}(A)-\ind_{\mathcal{T}}(B)+\ind_{\mathcal{T}}(C)=\Phi({\rm coker} \mathbb{H}(a)).$$
Again by \cite[Lemma 2.2]{JO2}, $\mathbb{G}( \Sigma^{-1}M)=0$ and thus
$${\rm coker} \mathbb{H}(a)\cong{\rm coker} \mathbb{G}(\pi(g))={\rm coker} \mathbb{H}(g).$$
Therefore, we complete the proof.
\end{proof}

\begin{remark} Proposition \ref{L-7} was proved in triangulated categories by \cite[Theorem 4.4]{JO2}. In particular, if $n=2$, it recovers \cite[Proposition 2.2]{Pa}.
\end{remark}

Let us state the main result in this section as the following
\begin{theorem}\label{main1}  The map $\Phi$ descends to the Grothendieck group $K_{0}(\mod C).$
\end{theorem}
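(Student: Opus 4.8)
The plan is to verify that $\Phi$ annihilates the defining relations of $K_{0}(\mod C)$, i.e.\ that $\Phi([L])-\Phi([M])+\Phi([N])=0$ for every short exact sequence $0\to L\stackrel{u}{\to}M\stackrel{v}{\to}N\to 0$ in $\mod C$. Because $\Phi$ does not depend on the chosen preimage under $\mathbb{H}$ (Lemma \ref{map}), it suffices, for each such sequence, to exhibit a single convenient realization: an $\mathbb{E}$-triangle $X\stackrel{f}{\to}Y\stackrel{g}{\to}Z\dashrightarrow$ in $\mathscr{C}$ for which $\mathbb{H}$ returns a short exact sequence isomorphic to $0\to L\to M\to N\to 0$.

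Constructing this realization is the step I expect to cost the most work; I would carry it out inside $\mathcal{A}$. Recall from \cite{JO2} (quoted in the proof of Lemma \ref{map}) that $\mathbb{G}$ induces an equivalence $(\mathcal{T}\ast\Sigma\mathcal{T})/\Sigma\mathcal{T}\cong\mod C$; since by Lemma \ref{cluster}(1) no nonzero morphism with source in $\mathcal{T}$ factors through $\Sigma\mathcal{T}$, the restriction $\mathbb{G}|_{\mathcal{T}}$ is an equivalence of $\mathcal{T}$ onto the subcategory of projective $C$-modules. Choose projective presentations of $L$, $M$, $N$ that assemble, via the Horseshoe Lemma in $\mod C$, into a degreewise split short exact sequence of two-term complexes, and lift the maps between projectives to an upper-triangular morphism of two-term complexes with terms in $\mathcal{T}\subseteq\mathcal{A}$. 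Forming cones, and using the standard description of the cone of an upper-triangular morphism, produces objects $\bar X,\bar Y,\bar Z\in\mathcal{T}\ast\Sigma\mathcal{T}$ fitting into a triangle $\bar X\to\bar Y\to\bar Z\to\Sigma\bar X$ in $\mathcal{A}$. Applying $\mathbb{G}$, using $\mathbb{G}(\Sigma\mathcal{T})=0$ (Lemma \ref{cluster}(1)) for the three terms and the monicity of $u$ and epimorphy of $v$ for the connecting maps, one checks that $\mathbb{G}$ carries this triangle to $0\to L\to M\to N\to 0$. Finally, Lemma \ref{L-31} lifts the triangle to an $\mathbb{E}$-triangle $X\stackrel{f}{\to}Y\stackrel{g}{\to}Z\dashrightarrow$ in $\mathscr{C}$ whose image under $\pi$, hence under $\mathbb{H}$, is the required sequence; the only care needed is that lifting via Lemma \ref{L-31} changes objects only by projective summands, which $\mathbb{H}$ kills.

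Granting the realization, the rest is short. Since $\mathbb{H}(g)=v$ is an epimorphism, ${\rm coker}\,\mathbb{H}(g)=0$, so Proposition \ref{L-7} gives $\ind_{\mathcal{T}}(X)-\ind_{\mathcal{T}}(Y)+\ind_{\mathcal{T}}(Z)=\Phi(0)=0$. Next apply the Horseshoe Lemma in $\mathscr{C}$ (which has enough projectives) to $X\stackrel{f}{\to}Y\stackrel{g}{\to}Z\dashrightarrow$: one obtains $\mathbb{E}$-triangles $X'\to P_X\to X\dashrightarrow$, $Y'\to P_X\oplus P_Z\to Y\dashrightarrow$, $Z'\to P_Z\to Z\dashrightarrow$ with $P_X,P_Z\in\mathcal{P}$, together with a syzygy $\mathbb{E}$-triangle $X'\stackrel{f'}{\to}Y'\stackrel{g'}{\to}Z'\dashrightarrow$. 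Here $\mathcal{P}\subseteq\mathcal{T}$, because $\mathbb{E}^{i}(P,\mathcal{T})\cong\mathbb{E}(P,\Sigma^{i-1}\mathcal{T})=0$ for every projective $P$ and every $i\geq 1$ (Remark \ref{dimension}); hence $[P_X],[P_Z]\in K_{0}^{\rm sp}(\mathcal{T})$ and $[P_X]-[P_X\oplus P_Z]+[P_Z]=0$. Expanding the definition of $\Theta$ using these particular presentations, the projective contributions cancel and the identity $\ind_{\mathcal{T}}(X)-\ind_{\mathcal{T}}(Y)+\ind_{\mathcal{T}}(Z)=0$ just established removes the remaining index term, leaving $\Theta(X)-\Theta(Y)+\Theta(Z)=\ind_{\mathcal{T}}(X')-\ind_{\mathcal{T}}(Y')+\ind_{\mathcal{T}}(Z')$.

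Finally, apply Proposition \ref{L-7} once more, to $X'\stackrel{f'}{\to}Y'\stackrel{g'}{\to}Z'\dashrightarrow$. Applying $\pi$ to the column $\mathbb{E}$-triangles $X'\to P_X\to X\dashrightarrow$, $Y'\to P_X\oplus P_Z\to Y\dashrightarrow$, $Z'\to P_Z\to Z\dashrightarrow$, whose middle terms vanish in $\mathcal{A}$, identifies $\pi X'\cong\Sigma^{-1}\pi X$, $\pi Y'\cong\Sigma^{-1}\pi Y$, $\pi Z'\cong\Sigma^{-1}\pi Z$ compatibly, so that $\pi$ of the syzygy $\mathbb{E}$-triangle is isomorphic to the image under $\Sigma^{-1}$ of the triangle $\pi X\to\pi Y\to\pi Z\to\Sigma\pi X$. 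Feeding this into the long exact sequence produced by $\mathbb{G}=\Hom_{\mathcal{A}}(T,-)$, the connecting map $\mathbb{H}(Z')\to\mathbb{H}(X)=L$ has image $\ker(u)=0$, so $\mathbb{H}(g')$ is an epimorphism and ${\rm coker}\,\mathbb{H}(g')=0$. Hence Proposition \ref{L-7} gives $\ind_{\mathcal{T}}(X')-\ind_{\mathcal{T}}(Y')+\ind_{\mathcal{T}}(Z')=\Phi(0)=0$, and therefore $\Phi([L])-\Phi([M])+\Phi([N])=\Theta(X)-\Theta(Y)+\Theta(Z)=0$, so $\Phi$ descends to $K_{0}(\mod C)$.
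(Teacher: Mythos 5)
Your proof is correct and follows essentially the same route as the paper's: lift the short exact sequence in $\mod C$ to a triangle in $\mathcal{A}$ (the paper cites \cite[Lemma 2.7]{JO2} where you reconstruct it via projective presentations and cones), lift that to an $\mathbb{E}$-triangle in $\mathscr{C}$, form the syzygy $\mathbb{E}$-triangle by the Horseshoe construction, and apply the index-additivity statement twice (you invoke Proposition \ref{L-7} with vanishing cokernel where the paper uses Lemma \ref{L-5} directly, which is the same thing). The bookkeeping with $\Theta$ and the vanishing of ${\rm coker}\,\mathbb{H}(g')$ via the long exact sequence also match the paper's argument.
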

\begin{proof} Let
$$ 0\stackrel{}{\longrightarrow}x\stackrel{a}{\longrightarrow}l\stackrel{b}{\longrightarrow}m \stackrel{}{\longrightarrow}0$$
be an exact sequence in $\mod C$. By \cite[Lemma 2.7]{JO2}, we can lift this exact sequence to a triangle in $\mathcal{A}$
\begin{equation}\label{d-1}
X\stackrel{a_{1}}{\longrightarrow}L\stackrel{b_{1}}{\longrightarrow}M{\longrightarrow}\Sigma X.
\end{equation}
By Lemma \ref{L-31}, this triangle can be lifted to an $\mathbb{E}$-triangle in $\mathscr{C}$
$$X\stackrel{{a_{2}\choose a_{3}}}{\longrightarrow}L\oplus I\stackrel{(b_{2}~b_{3})}{\longrightarrow}M\stackrel{}\dashrightarrow$$
with $I\in\mathcal{P}$. By \cite[Lemmas 4.14, 4.15]{Hu}, there exists a commutative diagram of  $\mathbb{E}$-triangles
$$
\xymatrix{
  X'\ar[d]_{} \ar[r]^{f'} & L'\ar[d]_{} \ar[r]^{g'} &M'\ar[d]_{} \ar@{-->}[r]^{} &   \\
  P_{X}\ar[d]_{} \ar[r]^{} &  P_{X}\oplus P_{M}  \ar[d]_{} \ar[r]^{} &  P_{M}\ar[d]_{} \ar@{-->}[r]^{} &   \\
   X\ar@{-->}[d]_{} \ar[r]^-{{a_{2}\choose a_{3}}} &  L\oplus I\ar@{-->}[d]_{} \ar[r]^-{(b_{2}~b_{3})} &  M\ar@{-->}[d]_{} \ar@{-->}[r]^{} &  \\
  & &  &     }
$$
with $P_{X},P_{M}\in \mathcal{P}$.
Since  $\mathbb{H}((b_{2}~b_{3}))=b$ is an epimorphism, by Lemma \ref{L-5}, we obtain
\begin{equation}\label{e1}
\ind_{\mathcal{T}}(X)-\ind_{\mathcal{T}}(L\oplus I)+\ind_{\mathcal{T}}(M)=0.
\end{equation}
 Note that  $\pi(f')=\Sigma^{-1}a_{1}$ and $\pi(g')=\Sigma^{-1}b_{1}$ in $\mathcal{A}$. Thus, the $\mathbb{E}$-triangle
$$X'\stackrel{f'}{\longrightarrow}L'\stackrel{g'}{\longrightarrow}M'\stackrel{}\dashrightarrow$$
is the lift of the triangle
$$ \Sigma^{-1}X\stackrel{\Sigma^{-1}a_{1}}{\longrightarrow}\Sigma^{-1}L\stackrel{\Sigma^{-1}b_{1}}{\longrightarrow}\Sigma^{-1}M \stackrel{}{\longrightarrow}X.$$
Applying the functor $\Hom_{\mathcal{A}}(T,-)$ to the triangle (\ref{d-1}), we get an exact sequence
\begin{equation}\label{exact}
\Hom_{\mathcal{A}}(T,\Sigma^{-1}L)\stackrel{\mathbb{G}(\Sigma^{-1}b_{1})}{\longrightarrow}\Hom_{\mathcal{A}}(T,\Sigma^{-1}M)\stackrel{}{\longrightarrow}\Hom_{\mathcal{A}}(T,X) \stackrel{\mathbb{G}(a_{1})}{\longrightarrow}\Hom_{\mathcal{A}}(T,L).
\end{equation}
Since $\mathbb{G}(a_{1})=a$ is a monomorphism, by the exactness of $(\ref{exact})$, we get
 $${\rm coker }\mathbb{H}(g')={\rm coker }(\mathbb{G}(\Sigma^{-1}b_{1}))\cong\ker(\mathbb{G}(a_{1}))=0$$
and thus $\mathbb{H}(g')$ is an epimorphism. By Lemma \ref{L-5},
\begin{equation}\label{e2}
\ind_{\mathcal{T}}(X')-\ind_{\mathcal{T}}(L')+\ind_{\mathcal{T}}(M')=0.
\end{equation}
Set $\eta_{XLM}:=\Theta(X)-\Theta(L)+\Theta(M)$, by Proposition \ref{2333},
$$\eta_{XLM}=\Theta(X)-\Theta(L\oplus I)+\Theta(M).$$
Hence, by (\ref{e1}) and (\ref{e2}), we obtain
\begin{align*}
\eta_{XLM}&=\ind_{\mathcal{T}}(X)-[P_{X}]+\ind_{\mathcal{T}}(X')-\ind_{\mathcal{T}}(L\oplus I)+[P_{X}\oplus P_{M}]\\
&\quad-\ind_{\mathcal{T}}(L')+\ind_{\mathcal{T}}(M)-[P_{M}]+\ind_{\mathcal{T}}(M')\\
&=\ind_{\mathcal{T}}(X)-\ind_{\mathcal{T}}(L\oplus I)+\ind_{\mathcal{T}}(M)+\ind_{\mathcal{T}}(X')-\ind_{\mathcal{T}}(L')+\ind_{\mathcal{T}}(M')\\
&=0.
\end{align*}
Therefore, we finish the proof.
\end{proof}
\begin{corollary}
If $\mathscr{C}$ is a triangulated category with the suspension functor ${\Sigma}$ and $\mathcal{T}=\add T$ is an $n$-cluster tilting subcategory of $\mathscr{C}$, then we have the map
$$\Phi: K_0(\mod B)\longrightarrow  K_{0}^{\rm sp}(\mathcal{T}), \Hom_{\mathscr{C}}(T,M)\mapsto\ind_{\mathcal{T}}(\Sigma^{-1}M)+\ind_{\mathcal{T}}(M).$$
\end{corollary}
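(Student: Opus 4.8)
The plan is to recognize a triangulated category as a special instance of a Frobenius extriangulated category, so that the statement falls out of Theorem~\ref{main1} after unwinding the identifications.

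First I would recall that any triangulated category $\mathscr{C}$ becomes extriangulated with $\mathbb{E}(C,A):=\Hom_{\mathscr{C}}(C,\Sigma A)$ and the realization sending $\delta\in\mathbb{E}(C,A)$ to the distinguished triangle $A\to B\to C\xrightarrow{\delta}\Sigma A$, and that this structure satisfies Condition~\ref{WIC}. Here an object $P$ is projective iff $\Hom_{\mathscr{C}}(P,\Sigma\mathscr{C})=0$, which (testing against $\Sigma^{-1}P$) forces $P=0$; dually the injectives vanish, so $\mathcal{P}=0$ and the projectives coincide with the injectives. For each $X$, the rotation of the trivial triangle $\Sigma^{-1}X\xrightarrow{\operatorname{id}}\Sigma^{-1}X\to 0\to X$ is an $\mathbb{E}$-triangle $\Sigma^{-1}X\to 0\to X\dashrightarrow$ whose middle term lies in $\mathcal{P}$, so $\mathscr{C}$ has enough projectives and enough injectives; hence $\mathscr{C}$ is Frobenius extriangulated. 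Consequently $\mathcal{A}=\mathscr{C}/\mathcal{P}=\mathscr{C}$, the projection $\pi$ is the identity, the suspension of $\mathcal{A}$ is the given $\Sigma$, $B=C=\End_{\mathscr{C}}(T)$, and $\mathbb{F}=\mathbb{G}=\mathbb{H}=\Hom_{\mathscr{C}}(T,-)$. I would also note that, by Remark~\ref{dimension} (equivalently, directly from $\mathbb{E}^{i}(M,N)\cong\Hom_{\mathscr{C}}(M,\Sigma^{i}N)$), the orthogonality conditions defining $n$-cluster tilting in the extriangulated sense coincide with the familiar triangulated ones, so the hypothesis of the corollary is precisely the setting of Theorem~\ref{main1}.

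Next I would compute $\Theta$ explicitly. Applying the defining recipe for $\Theta$ to the $\mathbb{E}$-triangle $\Sigma^{-1}X\to 0\to X\dashrightarrow$ (so $X'=\Sigma^{-1}X$ and $P=0$) gives
\[
\Theta(X)=\ind_{\mathcal{T}}(\Sigma^{-1}X)-[0]+\ind_{\mathcal{T}}(X)=\ind_{\mathcal{T}}(\Sigma^{-1}X)+\ind_{\mathcal{T}}(X).
\]
By Lemma~\ref{map} the map $\Phi\colon K_{0}^{\rm sp}(\mod C)\to K_{0}^{\rm sp}(\mathcal{T})$, $[\mathbb{H}(M)]=[\Hom_{\mathscr{C}}(T,M)]\mapsto\Theta(M)$, is well defined, and by Theorem~\ref{main1} it factors through $K_{0}(\mod C)=K_{0}(\mod\End_{\mathscr{C}}(T))$; substituting the formula above for $\Theta(M)$ yields the asserted description of $\Phi$.

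I do not expect a genuine obstacle here: once the triangulated case is identified as a Frobenius extriangulated one, all the substantive work has already been carried out in Section~3. The only point requiring care is the bookkeeping of the specialization — checking that $\mathcal{P}=0$, that ``enough projectives/injectives'' holds vacuously, that $\mathcal{A}=\mathscr{C}$ carries the expected $\Sigma$, and that the extriangulated $n$-cluster tilting condition reduces to the classical one — all of which follow from the observations above.
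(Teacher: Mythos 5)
Your proposal is correct and is exactly the intended argument: the paper states this corollary without proof as an immediate specialization of Lemma \ref{map} and Theorem \ref{main1}, obtained by viewing the triangulated category as a Frobenius extriangulated category with $\mathcal{P}=0$, so that $\mathcal{A}=\mathscr{C}$ and the $\mathbb{E}$-triangle $\Sigma^{-1}X\to 0\to X\dashrightarrow$ gives $\Theta(X)=\ind_{\mathcal{T}}(\Sigma^{-1}X)+\ind_{\mathcal{T}}(X)$. Your bookkeeping of the identifications ($\mathcal{P}=0$, $B=C$, $\mathbb{F}=\mathbb{G}=\mathbb{H}$, reduction of the orthogonality conditions) is accurate and complete.
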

Note that for $n=2$ the above map was used to provide the exponents of the cluster characters in 2-CY triangulated categories (cf. \cite{Pa}).

\section{Cluster characters for 2-CY categories}
The notion of cluster characters for 2-CY triangulated categories is due to Palu \cite{Pa}. Fu and Keller \cite{Chan} established the link of $2$-CY Frobenius exact categories and cluster algebras with coefficients via cluster characters. In this section, by applying the obtained results in Section 3, we define the cluster characters on 2-CY Frobenius extriangulated categories and establish the multiplication formulas of cluster characters.


\begin{definition}\label{CC3} Let $\mathscr{C}$ be a {2-CY Frobenius extriangulated category}. A {\em cluster character} on $\mathscr{C}$ with values in a commutative ring $R$ is a map $\varphi:{\rm obj}(\mathscr{C})\rightarrow R$ satisfying the following conditions:

$(1)$ If $X\cong Y$ in $\mathscr{C}$, then $\varphi(X)=\varphi(Y)$.

$(2)$ For any objects $X,Y\in\mathscr{C}$, we have $\varphi(X\oplus Y)=\varphi(X)\varphi(Y)$.

$(3)$ For any objects $M,N\in\mathscr{C}$, if $\dim_{k}\mathbb{E}(N,M)=1$ and
$$N\stackrel{}{\longrightarrow}L\stackrel{}{\longrightarrow}M\stackrel{}\dashrightarrow~\text{and}~M\stackrel{}{\longrightarrow}L'\stackrel{}{\longrightarrow}N\stackrel{}\dashrightarrow$$
are  nonsplit $\mathbb{E}$-triangles, then $\varphi(N)\varphi(M)=\varphi(L)+\varphi(L')$.
\end{definition}
In what follows, the following assumptions are fixed.

$\bullet$  $\mathscr{C}$ is a {2-CY Frobenius extriangulated category}. Then, by Lemma \ref{L-31}, the stable category $\mathcal{A}$ of $\mathscr{C}$ is a 2-CY triangulated category with the suspension functor $\Sigma$.

$\bullet$ $T=\oplus_{i=1}^{n}T_{i}$ is a $2$-cluster tilting object in $\mathscr{C}$. We assume that $T_{i}$ is projective exactly for $r+1\leq i\leq n$. By Lemma \ref{cluster}, $\pi(T)=\oplus_{i=1}^{r}T_{i}=:T'$ is a $2$-cluster tilting object in $\mathcal{A}$.

Recall that $B=\End_{\mathscr{C}}(T)^{\rm opp}$, $C=\End_{\mathcal{A}}(T)^{\rm opp}$, and we have the following functors
\begin{flalign*}&\mathbb{F}: \mathscr{C}\rightarrow\mod B;~M\mapsto \Hom_{\mathscr{C}}(T,M),\\
&\mathbb{G}: \mathcal{A}\rightarrow\mod C;~M\mapsto \Hom_{\mathcal{A}}(T,M),\end{flalign*}
and $\mathbb{H}=G\pi$.

$\bullet$ For any $M\in\mathscr{C}$, $\mathbb{G}(\Sigma M)$ is finite dimensional.

For each $M\in\mathscr{C}$, we write
$$\ind_{T}(M):=\ind_{{\rm add}\, T}(M)~\text{and}~\ind_{T'}(M):=\ind_{{\rm add}\,T'}(\pi(M)).$$
We denote by $[\ind_{T}(M):T_{i}]$ the $i$th coefficient of $\ind_{T}(M)$ with respect to the basis $[T_1],\cdots,[T_n]$. Similarly, $[\ind_{T'}(M):T_{i}]$ is the $i$th coefficient of $\ind_{T'}(M)$ with respect to the basis $[T_1],\cdots,[T_r]$.

We recall the cluster character formulas in 2-CY triangulated categories and 2-CY Frobenius exact categories from \cite{Pa} and \cite{Chan}, respectively.

\subsection{Cluster characters for 2-CY triangulated categories} In this subsection, we assume that $\mathscr{C}$ is Hom-finite. Recall that $\mathcal{A}$ is a 2-CY triangulated category with the $2$-cluster tilting object $T'$. We define a bilinear form
$$\lr{~,~}:K_{0}^{\rm sp}(\mod C)\times K_{0}^{\rm sp}(\mod C)\rightarrow \mathbb{Z }$$
by setting
$$\lr{M,N}={\rm dim}_k\Hom_C(M,N)-{\rm dim}_k\Ext_C^1(M,N)$$
for any $C$-modules $M,N$. Similarly, we can also define the bilinear form $\lr{~,~}$ on $K_{0}^{\rm sp}(\mod B)$. We define an antisymmetric bilinear form on $K_{0}^{\rm sp}(\mod C)$ by setting
$$\lr{M,N}_{a}=\lr{M,N}-\lr{N,M}$$ for any $C$-modules $M,N$.
By \cite[Theorem 3.4]{Pa},  the antisymmetric bilinear form $\lr{~,~}_a$ descends to
the Grothendieck group $K_{0}(\mod C)$. For any $M\in \mathcal{A}$, set $${\rm coind}_{T'}(M)=-{\rm ind}_{T'} (\Sigma^{-1}M).$$  For each $1\leq i\leq r$, let $S_{i}$ be the top of the projective $C$-module $\mathbb{G}(T_{i})$.
Let ${\rm indec}\,(\mathcal{A})$ be a set of representatives for the isomorphism classes of indecomposable
objects in $\mathcal{A}$.

According to \cite[Section 6]{Ca}, the {\em Caldero--Chapoton map} $X^{T'}_{?}:{\rm indec}\,(\mathcal{A})\rightarrow \mathbb{Q}(x_{1},x_{2},\cdots,x_{r})$ is defined by
$$X^{T'}_{M}=\left\{
\begin{aligned}
~x_{i}, &  & \text{if}~M\cong \Sigma T_{i}, \\
\sum_{e}\chi({\rm Gr}_{e}(\mathbb{G}(M)))\prod_{i=1}^{r}x_{i}^{\lr{S_{i},e}_{a}-\lr{S_{i},\mathbb{G}M}}, &  & \text{otherwise}
\end{aligned}
\right.
$$
and it can be extended to a map $X^{T'}_{?}:{\rm obj}(\mathcal{A})\rightarrow \mathbb{Q}(x_{1},x_{2},\cdots,x_{r})$ by requiring that $X_{M\oplus N}^{T'}=X_{M}^{T'}X_{ N}^{T'}$.
Here ${\rm Gr}_{e}(\mathbb{G}(M))$ denotes the variety of submodules $N$ of $\mathbb{G}(M)$ whose class in $K_{0}(\mod C)$ is $e$ and $\chi$ is the Euler--Poincar\'{e} characteristic. By \cite[Theorem 1.4]{Pa}, $X^{T'}_{?}$ is a cluster character.
In fact, by \cite[Lemma 4.1]{Pa}, for any $M\in\A$,
\begin{equation}\label{sanjiaotz}
X_M^{T'}=\prod_{i=1}^{r}x_{i}^{-[{\rm coind}_{T'}M:T_i]}\sum_{e}\chi({\rm Gr}_{e}(\mathbb{G}(M)))\prod_{i=1}^{r}x_{i}^{\lr{S_{i},e}_{a}}.\end{equation}

\subsection{Cluster characters for 2-CY Frobenius exact categories} In this subsection, we assume that $\mathscr{C}$ is Hom-finite and exact. For each $1\leq i\leq n$, let $S_{i}$ be the top of the projective $B$-module $\mathbb{F}(T_{i})$. We  identify $\mod C$ with the full subcategory of $\mod B$ formed by the modules without composition factors isomorphic to one of the $S_{i}$, $r+1\leq i\leq n$. We define a bilinear form
$$\lr{~,~}_{3}:K_{0}^{\rm sp}(\mod B)\times K_{0}^{\rm sp}(\mod B)\rightarrow \mathbb{Z }$$
by setting
$$\lr{M,N}_{3}=\sum_{i=0}^{3}(-1)^{i}{\rm dim}_k\Ext_B^i(M,N)$$
for any $B$-modules $M$ and $N$. By \cite[Proposition 3.2]{Chan}, if $M$ is a $C$-module, then $\lr{M,N}_{3}$ only depends on the class ${\bf dim}\,M$ of $M$ in $K_{0}(\mod C)$. Hence, for any $C$-module $M$ with ${\bf dim}\,M=e$, we set
$$\lr{e,N}_{3}:=\lr{M,N}_{3}$$ for any $B$-module $N$.

Given $M\in \mathscr{C}$, Fu and Keller \cite{Chan} defined the Laurent polynomial
\begin{equation}\label{fucharacter}
X'^{T}_{M}=\prod_{i=1}^{n}x_{i}^{\lr{\mathbb{F}M,S_{i}}}\sum_{e}\chi({\rm Gr}_{e}(\Ext_{\mathscr{C}}^{1}(T,M)))\prod_{i=1}^{n}x_{i}^{-\lr{e,S_{i}}_{3}}
\end{equation}
where $\Ext_{\mathscr{C}}^{1}(T,M)\cong\Hom_{\mathcal{A}}(T,\Sigma M)$ is a $C$-module, ${\rm Gr}_{e}(\Ext_{\mathscr{C}}^{1}(T,M))$ denotes the variety of submodules $N$ of $\Ext_{\mathscr{C}}^{1}(T,M)$ such that the class of $N$ in $K_{0}(\mod C)$ is $e$ and $\chi$ is the Euler--Poincar\'{e} characteristic. By \cite[Theorem 3.3]{Chan}, $X'^{T}_{?}$ is a cluster character on $\mathscr{C}$. Moreover, $X'^{T}_{M}(x_1,\cdots,x_r,1,\cdots,1)=X^{T'}_{\Sigma M}$ for any $M \in\mathscr{C}$.

\subsection{Cluster characters for 2-CY Frobenius extriangulated categories}
By Theorem \ref{main1}, for any $M\in \mathscr{C}$, $\Theta(M)$ only depends on the class ${\bf dim}\,\mathbb{H}(M)=:e$ in $K_{0}(\mod C).$ Hence, we put
$[e:T_{i}]:=[\Theta(M):T_{i}].$
\begin{definition}\label{CC} Define a map $\mathbb{X}^{T}_{?}:{\rm obj}(\mathscr{C})\rightarrow \mathbb{Q}(x_{1},x_{2},\cdots,x_{n})$ by
\begin{equation}\label{echaracter}
\mathbb{X}^{T}_{M}=\prod_{i=1}^{n}x_{i}^{[{\rm ind}_{T}(M):T_{i}]}\sum_{e}\chi({\rm Gr}_{e}(\mathbb{G}(\Sigma M)))\prod_{i=1}^{n}x_{i}^{-[e:T_{i}]}\end{equation}
for any $M\in\mathscr{C}$.
Here $\mathbb{G}(\Sigma M)$ is a $C$-module, and ${\rm Gr}_{e}(\mathbb{G}(\Sigma M))$ denotes the variety of submodules $N$ of $\mathbb{G}(\Sigma M)$ such that the class of $N$ in $K_{0}(\mod C)$ is $e$ and $\chi$ is the Euler--Poincar\'{e} characteristic.
\end{definition}
\begin{lemma}\label{L-32}
$(1)$ For each $1\leq i\leq n$, $\mathbb{X}^{T}_{T_{i}}=x_{i}$.
$(2)$ For any $M,N\in\mathscr{C}$, $\mathbb{X}^{T}_{M\oplus N}=\mathbb{X}^{T}_{M}\mathbb{X}^{T}_{N}.$
\end{lemma}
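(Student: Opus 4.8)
The plan is to verify the two defining conditions of a cluster character that concern isomorphism-invariance (implicit) and direct sums, working directly from the formula \eqref{echaracter}. For part $(1)$, the key observation is that each summand $T_i$ is already in $\mathcal{T}=\add T$, so computing its index is immediate: from the $\mathbb{E}$-triangle $0\to T_i\xrightarrow{\id}T_i\to$ one reads off $\ind_T(T_i)=[T_i]$, hence $[\ind_T(T_i):T_j]=\delta_{ij}$ and the prefactor $\prod_j x_j^{[\ind_T(T_i):T_j]}$ equals $x_i$. It remains to see that the sum over Grassmannians contributes $1$. For this I would argue that $\mathbb{G}(\Sigma T_i)=\Hom_{\mathcal{A}}(T,\Sigma T_i)=0$: indeed $T\in\mathcal{T}$ and $\Sigma T_i\in\Sigma\mathcal{T}$, so by Lemma \ref{cluster}$(1)$ (with $n=2$, $i=1$) we get $\Hom_{\mathcal{A}}(T,\Sigma T_i)=0$. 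Thus the only submodule class is $e=0$, with $\mathrm{Gr}_0(0)$ a point, $\chi=1$, and $[0:T_j]=0$ since $\Theta$ of any object with $\mathbb{H}(M)=0$ vanishes (it lies in $\Cone(\mathcal{T},\mathcal{P})$ up to the stable category, or more directly $\Theta$ factors through $\mathbb{H}$ by Lemma \ref{map}). Hence $\mathbb{X}^T_{T_i}=x_i$.

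For part $(2)$, I would check additivity of each of the three ingredients appearing in the exponents and multiplicativity of the Euler-characteristic sum. First, $\ind_T(M\oplus N)=\ind_T(M)+\ind_T(N)$ is exactly Proposition \ref{useful}$(1)$, so the prefactor is multiplicative: $\prod_i x_i^{[\ind_T(M\oplus N):T_i]}=\prod_i x_i^{[\ind_T(M):T_i]}\prod_i x_i^{[\ind_T(N):T_i]}$. Second, the functor $\mathbb{G}(\Sigma(-))$ is additive, so $\mathbb{G}(\Sigma(M\oplus N))\cong\mathbb{G}(\Sigma M)\oplus\mathbb{G}(\Sigma N)$ as $C$-modules. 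The standard fact about quiver/module Grassmannians of a direct sum gives a decomposition $\mathrm{Gr}_e\big(\mathbb{G}(\Sigma M)\oplus\mathbb{G}(\Sigma N)\big)=\coprod_{e'+e''=e}\mathrm{Gr}_{e'}(\mathbb{G}(\Sigma M))\times\mathrm{Gr}_{e''}(\mathbb{G}(\Sigma N))$ only after stratifying; since $\chi$ is additive on constructible stratifications and multiplicative on products, one obtains $\chi(\mathrm{Gr}_e(\mathbb{G}(\Sigma M)\oplus\mathbb{G}(\Sigma N)))=\sum_{e'+e''=e}\chi(\mathrm{Gr}_{e'}(\mathbb{G}(\Sigma M)))\chi(\mathrm{Gr}_{e''}(\mathbb{G}(\Sigma N)))$. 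Third, since $\Theta$ is additive — this follows because $\ind_T$ and $\ind^{\mathrm{op}}_T$ are additive (Proposition \ref{useful}$(1)$) together with the characterization $\Theta(X)=\ind_T(X')-\ind^{\mathrm{op}}_T(X')$ in Proposition \ref{2333}, or simply from $\Theta$ factoring through the additive map $\Phi\circ(\mathbf{dim}\,\mathbb{H}(-))$ — we get $[e'+e'':T_i]=[e':T_i]+[e'':T_i]$, so the twisting factor $\prod_i x_i^{-[e:T_i]}$ splits accordingly.

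Assembling these, the product $\mathbb{X}^T_M\mathbb{X}^T_N$ expands to $\prod_i x_i^{[\ind_T(M):T_i]+[\ind_T(N):T_i]}\sum_{e',e''}\chi(\mathrm{Gr}_{e'}(\mathbb{G}(\Sigma M)))\chi(\mathrm{Gr}_{e''}(\mathbb{G}(\Sigma N)))\prod_i x_i^{-[e':T_i]-[e'':T_i]}$; reindexing by $e=e'+e''$ and using the three facts above identifies this with $\mathbb{X}^T_{M\oplus N}$. The isomorphism-invariance $\mathbb{X}^T_M=\mathbb{X}^T_N$ for $M\cong N$ is immediate since every ingredient ($\ind_T$, $\mathbb{G}\Sigma$, the Grassmannians, $\Theta$) is defined up to isomorphism. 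The main obstacle is the middle step: making precise that the Euler characteristic of a Grassmannian of a direct sum decomposes as the convolution of the factors' Euler characteristics. This is a well-known lemma in the Caldero--Chapoton literature (it appears in \cite{Ca} and is used in both \cite{Pa} and \cite{Chan}), so I would cite it rather than reprove the stratification argument, and the rest of the proof is a bookkeeping exercise in matching exponents.
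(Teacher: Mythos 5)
Your proposal is correct and follows essentially the same route as the paper: part (2) rests on additivity of $\ind_T$ (Proposition \ref{useful}(1)), the convolution formula for Euler characteristics of Grassmannians of direct sums taken from Palu's Lemma 4.1, and linearity of $e\mapsto[e:T_i]$ coming from $\Phi$ being a homomorphism, which is exactly the paper's computation. For part (1) the paper merely says it is straightforward, and your verification (index of $T_i$ equals $[T_i]$ and $\mathbb{G}(\Sigma T_i)=0$ by Lemma \ref{cluster}(1)) correctly supplies the omitted details.
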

\begin{proof} (1) This is straightforward.

$(2)$ As shown in \cite[Lemma 4.1]{Pa}, we have
$$\chi({\rm Gr}_{e}(\mathbb{G}(\Sigma M\oplus\Sigma  N)))=\sum_{e_{1}+e_{2}=e}\chi({\rm Gr}_{e_{1}}(\mathbb{G}(\Sigma M)))\chi({\rm Gr}_{e_{2}}(\mathbb{G}(\Sigma N))).$$
Thus,
\begin{align*}
\mathbb{X}^{T}_{M\oplus N}&=\prod_{i=1}^{n}x_{i}^{[{\rm ind}_{T}(M\oplus N):T_{i}]}\sum_{e}\chi({\rm Gr}_{e}(\mathbb{G}(\Sigma (M\oplus N)))\prod_{i=1}^{n}x_{i}^{-[e:T_{i}]}\\
&=\prod_{i=1}^{n}x_{i}^{[{\rm ind}_{T}(M\oplus N):T_{i}]}\sum_{e}\sum_{e_{1}+e_{2}=e}\chi({\rm Gr}_{e_{1}}(\mathbb{G}(\Sigma M)))\chi({\rm Gr}_{e_{2}}(\mathbb{G}(\Sigma N)))\prod_{i=1}^{n}x_{i}^{-[e:T_{i}]}\\
&=\prod_{i=1}^{n}x_{i}^{[{\rm ind}_{T}(M):T_{i}]}\prod_{i=1}^{n}x_{i}^{[{\rm ind}_{T}(N):T_{i}]}\sum_{e}(\sum_{e_{1}+e_{2}=e}\chi({\rm Gr}_{e_{1}}(\mathbb{G}(\Sigma M)))\\
&\quad\cdot\chi({\rm Gr}_{e_{2}}(\mathbb{G}(\Sigma N)))\prod_{i=1}^{n}x_{i}^{-[e_{1}:T_{i}]}\prod_{i=1}^{n}x_{i}^{-[e_{2}:T_{i}]})\\
&=(\prod_{i=1}^{n}x_{i}^{[{\rm ind}_{T}(M):T_{i}]}\sum_{e_{1}}\chi({\rm Gr}_{e_{1}}(\mathbb{G}(\Sigma (M)))\prod_{i=1}^{n}x_{i}^{-[e_{1}:T_{i}]})\\
&\quad\cdot(\prod_{i=1}^{n}x_{i}^{[{\rm ind}_{T}(N):T_{i}]}\sum_{e_{2}}\chi({\rm Gr}_{e_{2}}(\mathbb{G}(\Sigma (N)))\prod_{i=1}^{n}x_{i}^{-[e_{2}:T_{i}]})\\
&=\mathbb{X}^{T}_{M}\mathbb{X}^{T}_{N}.
\end{align*}
\end{proof}
\begin{theorem} The map $\mathbb{X}^{T}_{?}:{\rm obj}(\mathscr{C})\rightarrow \mathbb{Q}(x_{1},x_{2},\cdots,x_{n})$ is a cluster character.
\end{theorem}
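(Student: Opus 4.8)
The plan is to verify the three axioms of Definition \ref{CC3} for the map $\mathbb{X}^{T}_{?}$. Axioms (1) and (2) are already handled: invariance under isomorphism is clear since all the ingredients ($\ind_T$, $\mathbb{G}(\Sigma M)$, the Grassmannians) are isomorphism invariants, and the multiplicativity $\mathbb{X}^{T}_{M\oplus N}=\mathbb{X}^{T}_{M}\mathbb{X}^{T}_{N}$ is exactly Lemma \ref{L-32}(2). So the entire content of the theorem is the exchange relation, axiom (3): given $M,N\in\mathscr{C}$ with $\dim_k\mathbb{E}(N,M)=1$ and nonsplit $\mathbb{E}$-triangles $N\to L\to M\dashrightarrow$ and $M\to L'\to N\dashrightarrow$, we must show $\mathbb{X}^{T}_{N}\mathbb{X}^{T}_{M}=\mathbb{X}^{T}_{L}+\mathbb{X}^{T}_{L'}$.

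First I would apply $\pi$ to push everything into the 2-CY triangulated category $\mathcal{A}$, using Lemma \ref{L-31} to get triangles $\pi(N)\to\pi(L)\to\pi(M)\xrightarrow{\varepsilon}\Sigma\pi(N)$ and $\pi(M)\to\pi(L')\to\pi(N)\to\Sigma\pi(M)$ in $\mathcal{A}$. Here I must be careful about the dimension hypothesis: one needs $\dim_k\Hom_{\mathcal{A}}(\pi(M),\Sigma\pi(N))=1$ to match Palu's setup, or at least enough to run the argument; the hypothesis $\dim_k\mathbb{E}(N,M)=1$ together with the $\mathbb{E}$-triangle $\Sigma^{-1}\pi(N)$-sequence and adjunction should relate $\mathbb{E}(N,M)$ to $\Hom_{\mathcal{A}}(\pi(M),\Sigma\pi(N))$ — clarifying this identification is the first technical point. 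Then the key is the factorization: write $\mathbb{X}^{T}_{M}=\prod_i x_i^{[\ind_T(M):T_i]}\cdot Y_M$ where $Y_M=\sum_e\chi(\mathrm{Gr}_e(\mathbb{G}(\Sigma M)))\prod_i x_i^{-[e:T_i]}$, and note that $\mathbb{G}(\Sigma\pi(M))\cong\mathbb{G}(\Sigma M)$ so the ``Grassmannian part'' $Y_M$ depends only on $\pi(M)$ and coincides (up to the monomial prefactor involving $[e:T_i]$, which is governed by $\Theta$ hence by $\mathbb{H}(M)={\bf dim}$-class in $K_0(\mod C)$) with the analogous quantity appearing in Palu's Caldero--Chapoton formula \eqref{sanjiaotz}. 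The plan is thus to reduce $Y_N Y_M = (\text{monomial}_1)\,Y_L + (\text{monomial}_2)\,Y_{L'}$ to Palu's multiplication formula applied to the triangles in $\mathcal{A}$, where the monomials encode the discrepancy between $\ind_T$ computed in $\mathscr{C}$ and $\mathrm{coind}_{T'}$ computed in $\mathcal{A}$.

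The bookkeeping of exponents is where the real work lies. Using Proposition \ref{L-7} applied to the two $\mathbb{E}$-triangles in $\mathscr{C}$, I get $\ind_T(N)-\ind_T(L)+\ind_T(M)=\Phi(\mathrm{coker}\,\mathbb{H}(g))$ and $\ind_T(M)-\ind_T(L')+\ind_T(N)=\Phi(\mathrm{coker}\,\mathbb{H}(g'))$ for the relevant deflations $g,g'$; the cokernels are identified via the long exact sequences obtained by applying $\mathbb{G}$ to the $\mathcal{A}$-triangles, exactly as in the proof of Theorem \ref{main1}. Combining these index identities with the relation $[e:T_i]=[\Theta(M):T_i]$ and the additivity of $\Theta$ along the short exact sequences $0\to\mathbb{H}(N)\to\mathbb{H}(L)\to\mathrm{coker}\to 0$ in $\mod C$, the monomial prefactors on both sides of the claimed identity should collapse to a common factor times $\prod_i x_i^{[\ind_T(N):T_i]+[\ind_T(M):T_i]}$, reducing the whole identity to the pure Grassmannian-Euler-characteristic identity that Palu proves (via the stratification of $\mathrm{Gr}_e$ by the image of the connecting map $\varepsilon$, splitting submodules of $\mathbb{G}(\Sigma M)\oplus\mathbb{G}(\Sigma N)$-type data into pieces lying in kernel/cokernel of $\mathbb{G}(\varepsilon)$). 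I expect the main obstacle to be precisely this reconciliation of exponents: showing that the ``extra'' contributions coming from the projective summands $T_{r+1},\dots,T_n$ of $T$ (which are invisible in $\mathcal{A}$) cancel correctly on both sides — this is where Proposition \ref{2333} (vanishing of $\Theta$ on $\Cone(\mathcal{T},\mathcal{P})$) and the splitting $T=T'\oplus(\text{projectives})$ must be used to show $[e:T_i]$ for $r+1\le i\le n$ is controlled entirely by the index terms, so that the formula \eqref{echaracter} genuinely specializes to Fu--Keller's \eqref{fucharacter} and to Palu's \eqref{sanjiaotz} and inherits the exchange relation from them.
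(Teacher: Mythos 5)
Your treatment of axioms (1) and (2) matches the paper, and your overall plan for axiom (3) --- pass to $\mathcal{A}$ via Lemma \ref{L-31}, stratify the submodule Grassmannians of $\mathbb{G}(\Sigma L)$ and $\mathbb{G}(\Sigma L')$ following Palu's Section 5, and control the exponents with Proposition \ref{L-7} --- is the skeleton of the paper's argument. (Your worry about transferring $\dim_k\mathbb{E}(N,M)=1$ to $\mathcal{A}$ is resolved by the standard identification $\mathbb{E}(N,M)\cong\Hom_{\mathcal{A}}(N,\Sigma M)$ in the Frobenius setting; the paper does not dwell on it.)

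The gap is in how you propose to handle the exponents. Reducing to Palu's multiplication formula in $\mathcal{A}$ as a black box cannot work: that is an identity in $\mathbb{Q}(x_1,\dots,x_r)$ and says nothing about the coefficients of the frozen variables $x_{r+1},\dots,x_n$, so at best it recovers the specialization $x_{r+1}=\dots=x_n=1$ (which is the content of Proposition \ref{www} and Corollary \ref{tuilun4.6}, not of the theorem). Likewise, ``additivity of $\Theta$ along $0\to\mathbb{H}(N)\to\mathbb{H}(L)\to\mathrm{coker}\to 0$'' is not the right mechanism: $\mathbb{H}(a)$ need not be injective, and the identity you need is not one about the whole modules but a per-stratum statement. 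Concretely, for every nonempty stratum $X^g_{e,f}$ and every submodule $E$ in it, the four-term exact sequence $0\to\ker\alpha\to(\mathbb{G}\Sigma\pi(a))^{-1}E\to E\to(\mathbb{G}\Sigma\pi(b))E\to 0$ has $\ker\alpha\cong\ker\mathbb{G}\Sigma\pi(a)\cong\mathrm{coker}\,\mathbb{H}(b)$ \emph{independent of $E$}, whence $[e+f-g:T_i]=[\mathbf{dim}\,\mathrm{coker}\,\mathbb{H}(b):T_i]=[\ind_{T}(N)-\ind_{T}(L)+\ind_{T}(M):T_i]$ for \emph{all} $1\le i\le n$, by Proposition \ref{L-7} together with Theorem \ref{main1} (which is what makes $[-:T_i]$ well defined on classes in $K_{0}(\mod C)$). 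Because Proposition \ref{L-7} is an identity in $K_{0}^{\rm sp}(\add T)$ --- it sees the projective summands $T_{r+1},\dots,T_n$ --- this pins down the frozen exponents as well, and the sum over strata then collapses directly to $\mathbb{X}^{T}_{L}+\mathbb{X}^{T}_{L'}$ using only Palu's Euler-characteristic identities for $\chi(X^{g}_{e,f})$ and $\chi(Y^{g}_{e,f})$, not his final Laurent-polynomial formula. With that replacement your outline becomes the paper's proof.
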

\begin{proof}

Let $M,N\in\mathscr{C}$ and assume that $\dim_{k}\mathbb{E}(N,M)=1$. Then we have non-split $\mathbb{E}$-triangles
$$N\stackrel{a}{\longrightarrow}L\stackrel{b}{\longrightarrow}M\stackrel{}\dashrightarrow$$
and
$$M\stackrel{c}{\longrightarrow}L'\stackrel{d}{\longrightarrow}N\stackrel{}\dashrightarrow.$$
By Lemma \ref{L-32}, we only need to show that $\mathbb{X}^{T}_{N}\mathbb{X}^{T}_{M}=\mathbb{X}^{T}_{L}+\mathbb{X}^{T}_{L'}$. By Lemma \ref{L-31}, we have triangles
\begin{equation}\label{d-2}
\Sigma N\stackrel{\Sigma \pi(a)}{\longrightarrow}\Sigma L\stackrel{\Sigma \pi(b)}{\longrightarrow}\Sigma M\stackrel{}{\longrightarrow}\Sigma^{2} N
\end{equation}
and
$$\Sigma M\stackrel{\Sigma \pi(c)}{\longrightarrow}\Sigma L'\stackrel{\Sigma \pi(d)}{\longrightarrow}\Sigma N\stackrel{}{\longrightarrow}\Sigma^{2} M$$
in $\mathcal{A}$. For any $e,f,g\in K_{0}(\mod C)$, set
$$X_{e,f}=\{E\subseteq \mathbb{G}(\Sigma L)~|~{\bf dim}\, (\mathbb{G}\Sigma \pi(a))^{-1}E=e~\text{ and}~{\bf dim}\, (\mathbb{G}\Sigma \pi(b))E=f\},$$
$$Y_{e,f}=\{E\subseteq \mathbb{G}(\Sigma L')~|~{\bf dim}\,(\mathbb{G}\Sigma \pi(c))^{-1}E=f~\text{ and}~{\bf dim}\, (\mathbb{G}\Sigma \pi(d))E=e\},$$
$$X^{g}_{e,f}=X_{e,f}\cap {\rm Gr}_{g}\mathbb{G}(\Sigma L)~\text{and}~Y^{g}_{e,f}=Y_{e,f}\cap {\rm Gr}_{g}\mathbb{G}(\Sigma L').$$
By \cite[Section 5]{Pa}, we have
\begin{equation}\label{111}
\begin{split}
&\chi({\rm Gr}_{g}\mathbb{G}(\Sigma L))=\sum\limits_{e,f}\chi(X^{g}_{e,f}),\quad\chi({\rm Gr}_{g}\mathbb{G}(\Sigma L'))=\sum\limits_{e,f}\chi(Y^{g}_{e,f}),\\
&\chi({\rm Gr}_{e}(\mathbb{G}(\Sigma N))\times {\rm Gr}_{f}(\mathbb{G}(\Sigma  M)))=\sum_{g}(\chi(X^{g}_{e,f})+\chi(Y^{g}_{e,f})).
\end{split}\end{equation}
$\mathbf{Claim }$ If $X^{g}_{e,f}\neq {\O}$, then
\begin{equation}\label{222}
[\ind_{T}(N)-\ind_{T}(L)+\ind_{T}(M):T_{i}]=[f+e-g:T_{i}].
\end{equation}
Indeed, by \cite[Section 4]{Pa}, for $E\in X^{g}_{e,f}$, there is a  commutative diagram
\begin{equation*}
\xymatrix{
 &  (\mathbb{G}\Sigma \pi(a))^{-1}E\ar[d]^{i} \ar[r]^-{\alpha} & E\ar[d]^{j} \ar[r]^-{\beta} & (\mathbb{G}\Sigma\pi(b))E\ar[d]^{k}  &  \\
&  \mathbb{G}(\Sigma N) \ar[r]^-{\mathbb{G}\Sigma \pi(a)} &  \mathbb{G}(\Sigma L )\ar[r]^-{\mathbb{G}\Sigma \pi(b)} &\mathbb{G}(\Sigma M)  &  }
\end{equation*}
where $i,j,k$ are monomorphisms, $\beta$ is an epimorphism. It follows that $\ker\mathbb{G}\Sigma \pi(a)\cong\ker \alpha$, ${\bf dim}\,(\mathbb{G}\Sigma \pi(a))^{-1}E=e$ and ${\bf dim}\,(\mathbb{G}\Sigma\pi(b))E=f.$  By the exact sequence
$$0\stackrel{}{\longrightarrow}\ker \alpha\stackrel{}{\longrightarrow} (\mathbb{G}\Sigma \pi(a))^{-1}E \stackrel{\alpha}{\longrightarrow}E\stackrel{\beta}{\longrightarrow}(\mathbb{G}\Sigma\pi(b))E\stackrel{}{\longrightarrow}0,$$
we obtain $g=f+e-{\bf dim}\ker\alpha$ and $$[g:T_{i}]=[f+e:T_{i}]-[{\bf dim}\ker\alpha:T_{i}]=[f+e:T_{i}]-[{\bf dim}\ker \mathbb{G}\Sigma \pi(a):T_{i}].$$
Applying the functor $\Hom_{\mathcal{A}}(T,-)$ to the triangle (\ref{d-2}), we get an exact sequence
\begin{equation*}
\Hom_{\mathcal{A}}(T,L)\stackrel{s}{\longrightarrow}\Hom_{\mathcal{A}}(T,M)\stackrel{}{\longrightarrow}\Hom_{\mathcal{A}}(T,\Sigma N) \stackrel{\mathbb{G}\Sigma \pi(a)}{\longrightarrow}\Hom_{\mathcal{A}}(T,\Sigma L).
\end{equation*}
Thus, ${\rm coker}(s)\cong\ker \mathbb{G}\Sigma \pi(a)\cong\ker\alpha $.  Since $\mathbb{H}(b)=s$, by Proposition \ref{L-7}, we get
$$\ind_{T}(N)-\ind_{T}(L)+\ind_{T}(M)=\Phi({{\rm coker} \mathbb{H}(b)})=\Phi(\ker \mathbb{G}\Sigma \pi(a)).$$
Thus,
$$[\ind_{T}(N)-\ind_{T}(L)+\ind_{T}(M):T_{i}]=[{\bf dim}\ker\mathbb{G}\Sigma \pi(a):T_{i}]=[f+e-g:T_{i}].$$
Similarly, if $Y^{g}_{e,f}\neq {\O}$, then
\begin{equation}\label{666}
[\ind_{T}(N)-\ind_{T}(L')+\ind_{T}(M):T_{i}]=[f+e-g:T_{i}].
\end{equation}
By (\ref{111}-\ref{666}), we obtain
\begin{align*}
\mathbb{X}^{T}_{N}\mathbb{X}^{T}_{M}&=(\prod_{i=1}^{n}x_{i}^{[{\rm ind}_{T}(N):T_{i}]}\sum_{e}\chi({\rm Gr}_{e}(\mathbb{G}(\Sigma (N)))\prod_{i=1}^{n}x_{i}^{-[e:T_{i}]})\\&\quad\cdot(\prod_{i=1}^{n}x_{i}^{[{\rm ind}_{T}(M):T_{i}]}\sum_{f}\chi({\rm Gr}_{f}(\mathbb{G}(\Sigma (M)))\prod_{i=1}^{n}x_{i}^{-[f:T_{i}]})\\
&=\prod_{i=1}^{n}x_{i}^{[{\rm ind}_{T}(N)+{\rm ind}_{T}(M):T_{i}]}\sum_{e,f}\chi({\rm Gr}_{e}(\mathbb{G}(\Sigma (N))))\chi({\rm Gr}_{f}(\mathbb{G}(\Sigma (M)))\prod_{i=1}^{n}x_{i}^{-[e+f:T_{i}]}\\
&=\prod_{i=1}^{n}x_{i}^{[{\rm ind}_{T}(N)+{\rm ind}_{T}(M):T_{i}]}\sum_{e,f,g}(\chi(X^{g}_{e,f})+\chi(Y^{g}_{e,f}))\prod_{i=1}^{n}x_{i}^{-[e+f:T_{i}]} \\
&=\sum_{e,f,g}\chi(X^{g}_{e,f})\prod_{i=1}^{n}x_{i}^{[-e-f+{\rm ind}_{T}(N)+{\rm ind}_{T}(M):T_{i}]}+\sum_{e,f,g}\chi(Y^{g}_{e,f})\prod_{i=1}^{n}x_{i}^{[-e-f+{\rm ind}_{T}(N)+{\rm ind}_{T}(M):T_{i}]}\\
&=\sum_{e,f,g}\chi(X^{g}_{e,f})\prod_{i=1}^{n}x_{i}^{[-g+{\rm ind}_{T}(L):T_{i}]}+\sum_{e,f,g}\chi(Y^{g}_{e,f})\prod_{i=1}^{n}x_{i}^{[-g+{\rm ind}_{T}(L'):T_{i}]}\\
&=\prod_{i=1}^{n}x_{i}^{[{\rm ind}_{T}(L):T_{i}]}\sum_{g}\chi({\rm Gr}_{g}(\mathbb{G}(\Sigma L)))\prod_{i=1}^{n}x_{i}^{-[g:T_{i}]}\\&\quad+\prod_{i=1}^{n}x_{i}^{[{\rm ind}_{T}(L'):T_{i}]}\sum_{g}\chi({\rm Gr}_{g}(\mathbb{G}(\Sigma L')))\prod_{i=1}^{n}x_{i}^{-[g:T_{i}]}\\
&=\mathbb{X}^{T}_{L}+\mathbb{X}^{T}_{L'}.
\end{align*}
Therefore, we complete the proof.
\end{proof}

\subsection{Comparisons of cluster characters}
Assume that $\mathscr{C}$ is Hom-finite.
Since $\mathcal{A}$ is a 2-CY triangulated category, one can define the cluster character $X^{T'}_{?}$ on $\mathcal{A}$. The following proposition shows that $\mathbb{X}^{T}_{?}$ generalizes the cluster characters for 2-CY triangulated categories in \cite{Pa}.

\begin{proposition}\label{www} For any indecomposable object $M\in\mathscr{C}$, we have $$\mathbb{X}^{T}_{M}(x_1,\cdots,x_r,1,\cdots,1)=X^{T'}_{\Sigma M}.$$ In particular, if $\mathscr{C}$ is a triangulated category, then $\mathbb{X}^{T}_{M}=X^{T}_{\Sigma M}$.
\end{proposition}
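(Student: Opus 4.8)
The plan is to compare the two Laurent polynomials monomial by monomial after the specialization. First I would rewrite the right-hand side by formula (\ref{sanjiaotz}), using ${\rm coind}_{T'}(\Sigma M)=-{\rm ind}_{T'}(M)$, as
\[
X^{T'}_{\Sigma M}=\prod_{i=1}^{r}x_{i}^{[{\rm ind}_{T'}(M):T_{i}]}\sum_{e}\chi({\rm Gr}_{e}(\mathbb{G}(\Sigma M)))\prod_{i=1}^{r}x_{i}^{\lr{S_{i},e}_{a}},
\]
and expand the left-hand side by putting $x_{r+1}=\cdots=x_{n}=1$ in (\ref{echaracter}):
\[
\mathbb{X}^{T}_{M}(x_{1},\dots,x_{r},1,\dots,1)=\prod_{i=1}^{r}x_{i}^{[{\rm ind}_{T}(M):T_{i}]}\sum_{e}\chi({\rm Gr}_{e}(\mathbb{G}(\Sigma M)))\prod_{i=1}^{r}x_{i}^{-[e:T_{i}]}.
\]
The key observation is that the $C$-module $\mathbb{G}(\Sigma M)=\Hom_{\mathcal{A}}(T',\Sigma\pi M)$ and the summation index set $K_{0}(\mod C)$ are literally the same in both expressions, so the Euler characteristics cancel term by term and the proposition reduces to the exponent identity
\[
[{\rm ind}_{T}(M):T_{i}]-[e:T_{i}]=[{\rm ind}_{T'}(M):T_{i}]+\lr{S_{i},e}_{a}
\]
for all $1\le i\le r$ and all $e\in K_{0}(\mod C)$.

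I would obtain this from two separate facts. The first is $[{\rm ind}_{T}(M):T_{i}]=[{\rm ind}_{T'}(M):T_{i}]$ for $1\le i\le r$. To prove it, choose (by \cite[Lemma 3.4]{Wl}) an $\mathbb{E}$-triangle $T_{1}\to T_{0}\to M\dashrightarrow$ in $\mathscr{C}$ with $T_{0},T_{1}\in\add T$ and $T_{0}\to M$ a right $\add T$-approximation; since $\add T=(\add T)^{\perp_{1}}$, Proposition \ref{useful} gives ${\rm ind}_{T}(M)=[T_{0}]-[T_{1}]$ in $K_{0}^{\rm sp}(\add T)$. Applying $\pi$ and Lemma \ref{L-31} yields a triangle $\pi T_{1}\to\pi T_{0}\to\pi M\to\Sigma\pi T_{1}$ in $\mathcal{A}$ whose middle map remains a right $\add T'$-approximation (the epimorphism $\mathbb{F}(T_{0}\to M)$ descends to the corresponding map of $\Hom_{\mathcal{A}}$-groups), so the same proposition, now applied to $\mathcal{A}$, gives ${\rm ind}_{T'}(M)=[\pi T_{0}]-[\pi T_{1}]$ in $K_{0}^{\rm sp}(\add T')$. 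As $\pi$ sends an indecomposable summand $T_{j}$ of $T$ to an indecomposable summand of $T'$ when $j\le r$ and to $0$ when $j>r$, comparing multiplicities gives the claim, and the exponent identity collapses to $[e:T_{i}]=-\lr{S_{i},e}_{a}$.

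The second, and principal, fact is $[e:T_{i}]=-\lr{S_{i},e}_{a}$ for $1\le i\le r$ and all $e\in K_{0}(\mod C)$. Both sides are additive in $e$ — the left side by Theorem \ref{main1}, the right side by \cite[Theorem 3.4]{Pa} — so it suffices to treat $e=[S_{j}]$, the simple $C$-modules forming a $\mathbb{Z}$-basis of $K_{0}(\mod C)$. Combining Proposition \ref{L-7} in $\mathscr{C}$ with its triangulated counterpart \cite[Proposition 2.2]{Pa} in $\mathcal{A}$ together with the first fact, one checks that for $1\le i\le r$ the $T_{i}$-coordinate of $\Phi(e)$ equals the $\pi T_{i}$-coordinate of the map $\mathbb{G}(N)\mapsto{\rm ind}_{T'}(\Sigma^{-1}N)+{\rm ind}_{T'}(N)$ supplied by the corollary following Theorem \ref{main1}, so the problem is transported entirely into $\mathcal{A}$. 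Picking $N_{j}\in\mathcal{A}$ with $\mathbb{G}(N_{j})\cong S_{j}$ (which exists because $\mathbb{G}$ restricts to an equivalence $(\mathcal{T}\ast\Sigma\mathcal{T})/\Sigma\mathcal{T}\simeq\mod C$, see \cite{JO2}), what remains is the equality $[{\rm ind}_{T'}(\Sigma^{-1}N_{j})+{\rm ind}_{T'}(N_{j}):\pi T_{i}]=-\lr{S_{i},S_{j}}_{a}$, which says that the exchange matrix of the seed read off from indices coincides with the one read off from the antisymmetrized Euler form. I expect this last identification to be the only real obstacle; it can be extracted from \cite[Lemma 4.1]{Pa}, which gives $\lr{S_{i},\mathbb{G}N}=-[{\rm ind}_{T'}(\Sigma^{-1}N):T_{i}]$, together with the $2$-Calabi--Yau symmetry that converts $\lr{S_{i},-}$ into $\lr{-,S_{i}}$.

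Finally, the two degenerate cases are immediate. If $M\in\mathcal{P}$ then $\mathbb{G}(\Sigma M)=0$ and, since $\mathcal{P}\subseteq\add T$ is spanned by the $T_{j}$ with $j>r$, $[{\rm ind}_{T}(M):T_{i}]=0$ for $i\le r$, so both sides equal $1=X^{T'}_{0}=X^{T'}_{\Sigma M}$. And when $\mathscr{C}$ is itself triangulated one has $\mathcal{P}=0$, $r=n$, there is nothing to specialize, and $\mathbb{X}^{T}_{M}=X^{T}_{\Sigma M}$ is precisely the identity established above.
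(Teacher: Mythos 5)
Your proposal is correct and takes essentially the same route as the paper: both reduce the statement to the two exponent identities $[{\rm ind}_{T}(M):T_{i}]=[{\rm ind}_{T'}(M):T_{i}]$ for $1\le i\le r$ (proved by descending a presentation $\mathbb{E}$-triangle $T^{M}_{1}\to T^{M}_{0}\to M\dashrightarrow$ to a triangle in $\mathcal{A}$ via Lemma \ref{L-31}) and $[e:T_{i}]=-\lr{S_{i},e}_{a}$ (ultimately resting on Palu's index/coindex formulas for $\lr{S_{i},-}$ and $\lr{-,S_{i}}$). The only differences are cosmetic: the paper proves the second identity by a direct computation with $\Theta(X)={\rm ind}_{T}(\Sigma^{-1}X)-[I(\Sigma^{-1}X)]+{\rm ind}_{T}(X)$ for an arbitrary $X$ with ${\bf dim}\,\mathbb{H}(X)=e$, so your detour through additivity in $e$ and reduction to the simples $S_{j}$ is unnecessary, and the paper treats the cases $M\cong T_{i}$ separately rather than relying on formula (\ref{sanjiaotz}) to cover $\Sigma T_{i}$.
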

\begin{proof}
If $M\cong T_i$ for some $1\leq i\leq r$, then $\mathbb{X}^{T}_{M}(x_1,\cdots,x_r,1,\cdots,1)=x_i=X^{T'}_{\Sigma M}$.
If $M\cong T_i$ for some $r+1\leq i\leq n$, then $\mathbb{X}^{T}_{M}(x_1,\cdots,x_r,1,\cdots,1)=1=X^{T'}_{\Sigma M}$.

Assume that $M\ncong T_i$ for any $1\leq i\leq n$.
For each $1\leq i\leq r$, let $S_{i}$ be the top of the projective $C$-module $\mathbb{G}(T_{i})$. By \cite[Lemma 2.3]{Pa}, we have
$$\lr{S_{i},\mathbb{F}\Sigma M}=[{\rm coind}_{T'}(\Sigma M): T_{i}]=-[\ind_{T'} (M):T_{i}].$$
Thus, by (\ref{sanjiaotz}), we obtain
$$X^{T'}_{\Sigma M}=\prod_{i=1}^{r}x_{i}^{[{\rm ind}_{T'} (M): T_{i}]}\sum_{e}\chi({\rm Gr}_{e}(\mathbb{G}(\Sigma M)))\prod_{i=1}^{r}x_{i}^{\lr{S_{i},e}_{a}}.$$
While,
$$\mathbb{X}^{T}_{M}(x_1,\cdots,x_r,1,\cdots,1)=\prod_{i=1}^{r}x_{i}^{[{\rm ind}_{T}(M):T_{i}]}\sum_{e}\chi({\rm Gr}_{e}(\mathbb{G}(\Sigma M)))\prod_{i=1}^{r}x_{i}^{-[e:T_{i}]}.$$

$\mathbf{Claim~1.}$ For any $1\leq i\leq r$, we have $[{\rm ind}_{T}(M):T_{i}]=[{\rm ind}_{T'}(M):T_{i}]$.

Let
$$T^{M}_{1}\stackrel{}{\longrightarrow}T^{M}_{0}\stackrel{}{\longrightarrow}M\stackrel{}\dashrightarrow$$ be an $\mathbb{E}$-triangle
in $\mathscr{C}$ with $T^{M}_{0},T^{M}_{1}\in \add T$.

Assume that $T^{M}_{1}=\oplus_{i=1}^n a_iT_{i}$ and $T^{M}_{0}=\oplus_{i=1}^nb_iT_{i}$. Thus, $[\ind_{T}M: T_{i}]=b_{i}-a_{i}$. By Lemma \ref{L-31},
$$T^{M}_{1}\stackrel{}{\longrightarrow}T^{M}_{0}\stackrel{}{\longrightarrow}M\rightarrow \Sigma T^{M}_{1}$$
is a triangle in $\mathcal{A}$, and
$$[\ind_{T'}M: T_{i}]=\left\{
\begin{aligned}
b_{i}-a_{i}, &  & \text{if}~1\leq i\leq r, \\
0, &  & \text{if}~r+1\leq i\leq n.
\end{aligned}
\right.
$$

$\mathbf{Claim~2.}$ For any $1\leq i\leq r$ and $X\in\mathscr{C}$ with ${\bf dim}\,\mathbb{H}(X)=e$, we have $$\lr{S_{i},e}_{a}=-[e:T_{i}].$$

By \cite[Lemma 2.3]{Pa},
\begin{align*}
\lr{S_{i},e}_{a}&=\lr{S_{i},\mathbb{H}(X)}_{a}=\lr{S_{i},\mathbb{H}(X)}-\lr{\mathbb{H}(X),S_{i}}\\
&=[{\rm coind}_{T'} (X): T_{i}]-[{\rm ind}_{T'}(X): T_{i}]=[{\rm coind}_{T'} (X)-{\rm ind}_{T'}(X): T_{i}]\\
&=[-{\rm ind}_{T'} (\Sigma^{-1}X)-{\rm ind}_{{T'}} (X): T_{i}]=-[{\rm ind}_{T'} (\Sigma^{-1}X)+{\rm ind}_{T'}(X): T_{i}].
\end{align*}
Consider the $\mathbb{E}$-triangle
$$\Sigma^{-1}X\stackrel{}{\longrightarrow}I(\Sigma^{-1}X)\stackrel{}{\longrightarrow}X\dashrightarrow$$
in $\mathscr{C}$ with $I(\Sigma^{-1}X)\in \mathcal{P}$. For each $1\leq i\leq r$, by Claim 1, we get
\begin{align*}
-[e:T_{i}]&=-[{\rm ind}_{T} (\Sigma^{-1}X)-[I(\Sigma^{-1}X)]+{\rm ind}_{T}(X): T_{i}]\\
&=-[{\rm ind}_{T'} (\Sigma^{-1}X)+{\rm ind}_{T'}(X): T_{i}]\\
&=\lr{S_{i},e}_{a}.
\end{align*}
Therefore, by these two Claims we complete the proof.
\end{proof}
\begin{corollary}\label{tuilun4.6}
Assume that $\mathscr{C}$ is a 2-CY Frobenius exact category. Then $$\mathbb{X}^{T}_{M}(x_1,\cdots,x_r,1,\cdots,1)=X'^{T}_{M}(x_1,\cdots,x_r,1,\cdots,1).$$
\end{corollary}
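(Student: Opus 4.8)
The plan is to deduce the identity formally from Proposition \ref{www} together with Fu--Keller's comparison of $X'^{T}_{?}$ with the Caldero--Chapoton map on the stable category $\mathcal{A}$. Observe first that the hypotheses are compatible: throughout this subsection $\mathscr{C}$ is Hom-finite, and the corollary additionally assumes $\mathscr{C}$ is exact, so both the setting of Proposition \ref{www} and that of formula $(\ref{fucharacter})$ apply.

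Since $\mathbb{X}^{T}_{?}$ is a cluster character on $\mathscr{C}$ and, by \cite[Theorem 3.3]{Chan}, so is $X'^{T}_{?}$, both maps are multiplicative with respect to direct sums (Lemma \ref{L-32}(2) and Definition \ref{CC3}(2)). Moreover, the defining formulas $(\ref{echaracter})$ and $(\ref{fucharacter})$ exhibit $\mathbb{X}^{T}_{M}$ and $X'^{T}_{M}$ as honest Laurent polynomials in $x_{1},\cdots,x_{n}$, so the substitution $x_{r+1}=\cdots=x_{n}=1$ is the application of a ring homomorphism $\mathbb{Z}[x_{1}^{\pm1},\cdots,x_{n}^{\pm1}]\to\mathbb{Z}[x_{1}^{\pm1},\cdots,x_{r}^{\pm1}]$; in particular it commutes with products.

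I would then write $M=\bigoplus_{j}M_{j}^{\oplus m_{j}}$ with the $M_{j}$ indecomposable. Applying multiplicativity of each character and then the substitution, one has $\mathbb{X}^{T}_{M}(x_{1},\cdots,x_{r},1,\cdots,1)=\prod_{j}\mathbb{X}^{T}_{M_{j}}(x_{1},\cdots,x_{r},1,\cdots,1)^{m_{j}}$ and the analogous product for $X'^{T}_{M}$, so the desired equality reduces to the case of an indecomposable $M$. For such $M$, Proposition \ref{www} gives $\mathbb{X}^{T}_{M}(x_{1},\cdots,x_{r},1,\cdots,1)=X^{T'}_{\Sigma M}$, while $X'^{T}_{M}(x_{1},\cdots,x_{r},1,\cdots,1)=X^{T'}_{\Sigma M}$ by \cite{Chan}. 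Hence the two specializations coincide on indecomposables, and therefore, by the product formulas, on all objects of $\mathscr{C}$.

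There is no genuine obstacle here: the substantive identifications have already been carried out in Proposition \ref{www} and in \cite{Chan}, and the corollary is a matter of transitivity through $X^{T'}_{\Sigma M}$ combined with multiplicativity. The only point deserving a word of care is that the evaluation at $x_{r+1}=\cdots=x_{n}=1$ is a legitimate ring map on Laurent polynomial rings that commutes with the direct-sum decomposition of $M$; this is immediate from $(\ref{echaracter})$ and $(\ref{fucharacter})$.
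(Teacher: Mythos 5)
Your proof is correct and follows essentially the same route as the paper's one-line argument, namely transitivity through $X^{T'}_{\Sigma M}$ via Proposition \ref{www} and Fu--Keller's Theorem 3.3(b). The only difference is that you explicitly justify the reduction to indecomposable $M$ using multiplicativity and the fact that specialization is a ring homomorphism, a small point the paper leaves implicit since Proposition \ref{www} is stated only for indecomposables.
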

\begin{proof} By Proposition \ref{www} and \cite[Theorem 3.3(b)]{Chan}, $$\mathbb{X}^{T}_{M}(x_1,\cdots,x_r,1,\cdots,1)=X^{T'}_{\Sigma M}=X'^{T}_{M}(x_1,\cdots,x_r,1,\cdots,1).$$
\end{proof}

For each $1\leq i\leq n$, let $S_{i}$ be the top of the projective $B$-module $\mathbb{F}(T_{i})$. For any $M\in\mathscr{C}$, by \cite[Lemma 3.4]{Wl}, there exists an $\mathbb{E}$-triangle
$$T^{M}_{1}\stackrel{}{\longrightarrow}T^{M}_{0}\stackrel{f}{\longrightarrow}M\stackrel{}\dashrightarrow$$
with $T^{M}_{0},T^{M}_{1}\in\add T$ such that $f$ is a minimal right $\add T$-approximation. Thus,
$$\mathbb{F}T^{M}_{1}\stackrel{}{\longrightarrow}\mathbb{F}T^{M}_{0}\stackrel{}{\longrightarrow}\mathbb{F}M{\longrightarrow}0$$
is a minimal projective presentation of $\mathbb{F}M$. Since each differential in the complex
$$0{\longrightarrow}{\rm Hom}_B(\mathbb{F}T^{M}_{0},S_{i})\stackrel{}{\longrightarrow}{\rm Hom}_B(\mathbb{F}T^{M}_{1},S_{i})\stackrel{}{\longrightarrow}\cdots$$
vanishes, ${\rm Hom}_{B}(\mathbb{F}M,S_{i})\cong{\rm Hom}_{B}(\mathbb{F}T^{M}_{0},S_{i})$ and $\Ext^{1}_B(\mathbb{F}M,S_{i})\cong{\rm Hom}_B(\mathbb{F}T^{M}_{1},S_{i})$. Hence,
\begin{flalign*}\lr{\mathbb{F}M,S_{i}}&={\rm dim}_k{\rm Hom}_{B}(\mathbb{F}T^{M}_{0},S_{i})-{\rm dim}_k{\rm Hom}_{B}(\mathbb{F}T^{M}_{1},S_{i})\\&=[\mathbb{F}T^{M}_{0}:\mathbb{F}T_{i}]-[\mathbb{F}T^{M}_{1}:\mathbb{F}T_{i}]
\\&=[\ind_{T} M:T_{i}].\end{flalign*}

Now, we assume that $\mathscr{C}$ is a 2-CY Frobenius exact category. Let us go on comparing the cluster characters (\ref{echaracter}) and (\ref{fucharacter}).

By \cite[Lemma 3.1]{Gorsky}, we obtain
$$\mathbb{G}(\Sigma M)={\rm Hom}_{\mathcal{A}}(T,\Sigma M)\cong\Ext_{\mathscr{C}}^{1}(T,M).$$
Therefore,
\begin{equation}\label{F1}
\mathbb{X}^{T}_{M}=\prod_{i=1}^{n}x_{i}^{\lr{\mathbb{F}M,S_{i}}}\sum_{e}\chi({\rm Gr}_{e}(\Ext_{\mathscr{C}}^{1}(T,M)))\prod_{i=1}^{n}x_{i}^{-[e:T_{i}]}.
\end{equation}

\begin{condition}\label{TT} For any indecomposable object $M\in\mathscr{C}$, if $M\notin\add T$, then $$[e:T_{i}]=\lr{N,S_{i}}_{3}$$ for any $C$-submodule $N$ of $\Ext_{\mathscr{C}}^{1}(T,M)$ with ${\bf dim}\,N=e$ in $K_{0}(\mod C)$ and $r+1\leq i\leq n$.
\end{condition}
\begin{proposition}\label{Fin} Let $\mathscr{C}$ be a 2-CY Frobenius exact category. Then $\mathbb{X}^{T}_{?}=X'^{T}_{?}$ if and only if the condition {\rm (\ref{TT})} holds.
\end{proposition}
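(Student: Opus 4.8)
The plan is to compare the two Laurent polynomials term by term. Both $\mathbb{X}^{T}_{M}$ and $X'^{T}_{M}$ are sums over dimension vectors $e$ of $C$-submodules of $\Ext_{\mathscr{C}}^{1}(T,M)\cong\mathbb{G}(\Sigma M)$, and by formula (\ref{F1}) together with (\ref{fucharacter}), the coefficients $\chi({\rm Gr}_{e}(\Ext_{\mathscr{C}}^{1}(T,M)))$ and the leading monomials $\prod_{i}x_{i}^{[\ind_{T}M:T_{i}]}=\prod_{i}x_{i}^{\lr{\mathbb{F}M,S_{i}}}$ already agree (the last equality is exactly the computation carried out just before Condition \ref{TT}). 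Hence $\mathbb{X}^{T}_{M}=X'^{T}_{M}$ for a given $M$ if and only if, for every $e$ occurring with $\chi({\rm Gr}_{e})\neq 0$, the exponents satisfy $-[e:T_{i}]=-\lr{e,S_{i}}_{3}$ for all $1\leq i\leq n$, equivalently $[e:T_{i}]=\lr{e,S_{i}}_{3}$.

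First I would dispose of the indices $1\leq i\leq r$ and of the objects in $\add T$. For $i$ in the range $1\leq i\leq r$, Claim 2 in the proof of Proposition \ref{www} already gives $[e:T_{i}]=-\lr{S_{i},e}_{a}$, while on the Fu--Keller side one has the analogous identity $\lr{e,S_{i}}_{3}=-\lr{S_{i},e}_{a}$ for $1\leq i\leq r$ (this is the content of the comparison underlying $X'^{T}_{M}(x_{1},\dots,x_{r},1,\dots,1)=X^{T'}_{\Sigma M}$, i.e. \cite[Theorem 3.3(b)]{Chan}). So the exponents of $x_{1},\dots,x_{r}$ always coincide, with no hypothesis needed; this is consistent with Corollary \ref{tuilun4.6}. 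If $M\in\add T$, then by Lemma \ref{L-32}(1) and the analogue for $X'^{T}_{?}$ both characters send $T_{i}$ to $x_{i}$, so equality is automatic. By multiplicativity (Lemma \ref{L-32}(2) and the corresponding property of $X'^{T}_{?}$) it then suffices to treat indecomposable $M\notin\add T$, and for these the only possible discrepancy is in the exponents of $x_{r+1},\dots,x_{n}$.

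Thus the equivalence reduces to: for all indecomposable $M\notin\add T$, all $r+1\leq i\leq n$, and all $e={\bf dim}\,N$ with $N$ a $C$-submodule of $\Ext_{\mathscr{C}}^{1}(T,M)$ such that $\chi({\rm Gr}_{e})\neq 0$, one has $[e:T_{i}]=\lr{e,S_{i}}_{3}$ --- which is precisely Condition \ref{TT} (one direction needs the condition only for those $e$ that actually contribute, but since ${\rm Gr}_{e}$ is then nonempty these $e$ are exactly submodule dimension vectors, so the two formulations match; for the ``only if'' direction one reads off the coefficient of the monomial attached to $e$ in the polynomial identity $\mathbb{X}^{T}_{M}=X'^{T}_{M}$, after separating out the already-equal $x_{1},\dots,x_{r}$ part). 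I would write this out as: \emph{($\Leftarrow$)} assume Condition \ref{TT}; combining it with the $i\leq r$ case above, all exponents of (\ref{F1}) and (\ref{fucharacter}) agree, hence $\mathbb{X}^{T}_{M}=X'^{T}_{M}$, and by the reductions this holds for all $M$. \emph{($\Rightarrow$)} assume $\mathbb{X}^{T}_{?}=X'^{T}_{?}$; fix indecomposable $M\notin\add T$, specialize $x_{1}=\dots=x_{r}=1$ is not yet enough, so instead compare the two Laurent polynomials in $\mathbb{Q}(x_{1},\dots,x_{n})$ directly: since their supports and $\chi$-coefficients match, equality forces $[e:T_{i}]=\lr{e,S_{i}}_{3}$ for every contributing $e$ and every $i$, in particular for $r+1\leq i\leq n$, which is Condition \ref{TT}.

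The step I expect to be the main obstacle is the bookkeeping in the ``only if'' direction: one must argue that two Laurent polynomials with the same monomial support (indexed by submodule classes $e$) and the same nonzero $\chi$-coefficients can only be equal if the exponent vectors coincide, which requires knowing that distinct $e$ give distinct monomials after the leading-term normalization --- equivalently, that the map $e\mapsto [e:T_{i}]_{i}$ (resp. $e\mapsto\lr{e,S_{i}}_{3}$) is injective on the relevant set, or that one can isolate each $e$'s contribution. This is handled by noting that the exponent of $x_{i}$ in the $e$-summand of $\mathbb{X}^{T}_{M}$ is $[\ind_{T}M:T_{i}]-[e:T_{i}]$ and that, for fixed $M$, distinct submodule classes $e\subseteq\Ext^{1}_{\mathscr{C}}(T,M)$ are distinguished already by the $x_{1},\dots,x_{r}$-exponents via Claim 2 of Proposition \ref{www} (these are genuine $K_{0}(\mod C)$-classes and the pairing $\lr{S_{i},-}_{a}$, $1\le i\le r$, separates them), so matching coefficients is legitimate termwise, and then the $x_{r+1},\dots,x_{n}$-exponents must match as well, yielding Condition \ref{TT}. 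Everything else is routine substitution into the definitions.
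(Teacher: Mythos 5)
Your overall strategy is the same as the paper's: both characters are sums over the same index set with the same coefficients $\chi({\rm Gr}_{e})$ and the same leading monomial $\prod_{i}x_{i}^{[\ind_{T}M:T_{i}]}=\prod_{i}x_{i}^{\lr{\mathbb{F}M,S_{i}}}$, so after reducing to indecomposable $M\notin\add T$ everything comes down to whether $[e:T_{i}]=\lr{e,S_{i}}_{3}$ for the contributing $e$; the range $1\leq i\leq r$ holds unconditionally and the range $r+1\leq i\leq n$ is exactly Condition \ref{TT}. Your sufficiency direction is correct, and your way of settling $1\leq i\leq r$ (Claim 2 of Proposition \ref{www} combined with the Fu--Keller identity relating $\lr{e,S_{i}}_{3}$ to $-\lr{S_{i},e}_{a}$) is a legitimate, slightly cleaner route than the paper's, which instead extracts this from the specialization equality of Corollary \ref{tuilun4.6} and therefore implicitly relies on the same termwise-matching step you worry about.

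The genuine problem is in your necessity direction. You correctly identify that one must pass from an equality of Laurent polynomials to an equality of exponent vectors term by term, and you propose to justify this by claiming that distinct classes $e$ are already separated by the $x_{1},\dots,x_{r}$-exponents, i.e.\ by the map $e\mapsto(\lr{S_{i},e}_{a})_{1\leq i\leq r}$. This is false in general: the Gram matrix $(\lr{S_{i},S_{j}}_{a})_{i,j}$ is a skew-symmetric integer matrix, hence singular whenever $r$ is odd, and it may be degenerate for even $r$ as well (its rank is that of the exchange matrix). So distinct contributing $e$ can produce identical $x_{1},\dots,x_{r}$-monomials, and the isolation of individual $e$-terms that your ``only if'' argument rests on is not justified; since the coefficients $\chi({\rm Gr}_{e})$ need not all be positive, one cannot simply wave this away either. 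To be fair, the paper disposes of necessity with ``comparing (\ref{fucharacter}) and (\ref{F1}), the necessity is clear,'' so you are attempting to fill a gap the paper leaves open --- but the filling you propose does not work, and as written the ``only if'' half of your proof has a hole.
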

\begin{proof}
Comparing (\ref{fucharacter}) and (\ref{F1}), we conclude that the necessity is clear.

Conversely, for any indecomposable object $M\in\mathscr{C}$, $\Ext_{\mathscr{C}}^{1}(T,M)=0$ if and only if $M\in\add T$. Thus, if $M\in \add T$, clearly, $\mathbb{X}^{T}_{M}=X'^{T}_{M}$.

Suppose that $M\notin \add T$. By Corollary \ref{tuilun4.6}, we obtain
$$\mathbb{X}^{T}_{M}(x_1,\cdots,x_r,1,\cdots,1)=X^{T'}_{\Sigma M}=X'^{T}_{M}(x_1,\cdots,x_r,1,\cdots,1).$$
Thus, $[e:T_{i}]=\lr{N,S_{i}}_{3}$ for any $C$-submodule $N$ of $\Ext_{\mathscr{C}}^{1}(T,M)$ with ${\bf dim}\,N=e$ in $K_{0}(\mod C)$ and $1\leq i\leq r$.
Using the condition {\rm (\ref{TT})}, we get $\mathbb{X}^{T}_{M}=X'^{T}_{M}$.
\end{proof}
We finish this section with an example illustrating Proposition \ref{Fin}.
\begin{example}\label{exa} Let $\mathscr{C}$ be the category of finite-dimensional modules over the preprojective algebra of a Dynkin quiver of type $\mathbb{A}_{2}$. Then $\mathscr{C}$ is a 2-CY Frobenius exact category (cf. \cite[Section 4]{Ke}). The Auslander--Reiten quiver of $\mathscr{C}$  is given by:
$$\xymatrix{
    \ar@{--}[d]_{}& 2\atop1 \ar[dr]_{}  &    &   1\atop2 \ar[dr]_{}  &\ar@{--}[d]_{} \\
        1\ar[ur]^{} & &  2\ar[ur]^{} &   &   1 }
$$
The two vertical dashed lines should be identified. There are exactly four indecomposable modules up to isomorphism:
$$\xymatrix{
 M=2,  &  T_{1}=1,& T_{2}={1\atop2}, &  T_{3}={2\atop1}.  }
$$
The module $T=T_{1}\oplus T_{2}\oplus T_{3}$ is a $2$-cluster tilting object in $\mathscr{C}$ and the algebra $B=\End_{\mathscr{C}}(T)$ is isomorphic to the path algebra of the quiver
$$ \xymatrix{
                & 1\ar[ld]_-{\gamma}             \\
  2\ar[rr]_-{\beta} & & 3 \ar[lu]_-{\alpha}          }
$$
with zero relations $\alpha\beta$ and $\beta\gamma$. The Auslander--Reiten quiver of $\mod B$ is as follows:
$$\xymatrix{
   &  &   &   {\begin{smallmatrix}3\\1\\2\end{smallmatrix}}\ar[dr]_{} &  & &   \\
 {\begin{smallmatrix}2\\3\end{smallmatrix}}  \ar[dr]_{}  &  &    {\begin{smallmatrix}1\\2\end{smallmatrix}} \ar[dr]_{} \ar[ur]^{} & &  {\begin{smallmatrix}3\\1\end{smallmatrix}}  \ar[dr]_{}  & &   {\begin{smallmatrix}2\\3\end{smallmatrix}} \\
  &  2\ar[ur]^{} &  & 1 \ar[ur]^{} &   &  3  \ar[ur]^{} &    }
$$
The Cartan matrix of $B$ is $C_B=\begin{pmatrix}1&0&1\\
1&1&1\\
0&1&1\end{pmatrix}$ and the Euler matrix is $$(C_B^{-1})^{\rm tr}=\begin{pmatrix}0&-1&1\\
1&1&-1\\
-1&0&1\end{pmatrix}.$$
Thus, $\lr{S_{1}, S_{2}}_{3}=-1$ and $\lr{ S_{1}, S_{3}}_{3}=1$.

It is easy to see that $$\mathbb{G}(\Sigma M)=\mathbb{G}(T_{1})\cong S_{1}$$
as $B$-modules. It is straightforward to check that  $\mathbb{X}_{T_{i}}^{T}=X'^{T}_{T_{i}}=x_{i}$ for $i=1,2,3$. Since $\mathbb{G}(\Sigma M)$ is a simple module, by Proposition \ref{Fin}, we know that $\mathbb{X}^{T}_{M}=X'^{T}_{M}$ if and only if $[{\bf dim}\,S_{1}:T_{i}]=\lr{S_{1},S_{i}}_{3}$ for $i=2,3$. Since there is an exact sequence
$$0\stackrel{}{\longrightarrow}2\stackrel{}{\longrightarrow}{1\atop2}\stackrel{}{\longrightarrow}1\stackrel{}{\longrightarrow}0,$$
by Proposition \ref{2333}, we get $$\Theta(1)=\ind_{T}(2)-\ind^{\rm op}_{T}(2)=([T_{3}]-[T_{1}])-([T_{2}]-[T_{1}])=[T_{3}]-[T_{2}].$$
Thus,
$[{\bf dim}\,S_{1}:T_{2}]=-1$~{and}~$[{\bf dim}\,S_{1}:T_{3}]=1$. Hence,
$$[{\bf dim}\,S_{1}:T_{2}]=\lr{S_{1}, S_{2}}_{3}\quad\text{and}\quad[{\bf dim}\,S_{1}:T_{3}]=\lr{ S_{1}, S_{3}}_{3}.$$
Therefore, the cluster characters $\mathbb{X}^{T}_{?}$ and $X'^{T}_{?}$ on $\mathscr{C}$ are equal.
\end{example}
\begin{remark}
After the paper was presented on arXiv, Faber--Marsh--Pressland proved the condition {\rm (\ref{TT})} automatically holds (cf. \cite[Proposition 5.4]{FMP}).
That is, for each 2-CY Frobenius exact category $\mathscr{C}$, the cluster character $\mathbb{X}^{T}_{?}$ coincides with Fu--Keller's cluster character $X'^{T}_{?}$.
\end{remark}


\end{document}